\documentclass[12pt]{amsart}
\usepackage[colorlinks=true,pagebackref,hyperindex,citecolor=blue,linkcolor=red]{hyperref}
\usepackage{amsmath}
\usepackage{amsfonts}
\usepackage{setspace}
\usepackage{moreverb}
\usepackage[dvips]{graphicx}
\usepackage[latin1]{inputenc}
\usepackage[T1]{fontenc}
\usepackage{amssymb}
\usepackage{tikz}
\usepackage{color}
\usepackage[all]{xy}  
\usepackage{xfrac}
\usepackage[top=1in, bottom=1in, left=1in, right=1in]{geometry}
\usepackage{mathrsfs}
\newcommand{\cb}{ }

%\usepackage{mathabx}

% THEOREM Environments ---------------------------------------------------

\newtheorem{theoremx}{Theorem}
 % "letter-numbered" theorems for Introduction 

\theoremstyle{definition}
\newtheorem{theorem}{Theorem}[section]

\newtheorem{corollary}[theorem]{Corollary}
\newtheorem{lemma}[theorem]{Lemma}
\newtheorem{proposition}[theorem]{Proposition}

\theoremstyle{definition}
\newtheorem{definition}[theorem]{Definition}

\newtheorem{example}[theorem]{Example}

\newtheorem{remark}[theorem]{Remark}
\numberwithin{equation}{subsection}

% MATH

%For sets

\newcommand{\NN}{\mathbb{N}}
\newcommand{\ZZ}{\mathbb{Z}}
\newcommand{\QQ}{\mathbb{Q}}
\newcommand{\FF}{\mathbb{F}}

\newcommand{\m}{\mathfrak{m}}
\newcommand{\n}{\mathfrak{n}}
\newcommand{\fa}{\mathfrak{a}}
\newcommand{\fb}{\mathfrak{b}}

\newcommand{\ls}{\leqslant}

\newcommand{\cK}{\mathcal{K}}

\newcommand{\cA}{\mathcal{A}}
\newcommand{\cJ}{\mathcal{J}}

\newcommand{\ehk}{\e_{HK}}

\newcommand{\pd}{\operatorname{pd}}
\newcommand{\cd}{\operatorname{cd}}

\newcommand{\e}{\operatorname{e}}

\newcommand{\Ht}{\operatorname{ht}}

\newcommand{\Cech}{ \check{\rm{C}}}

\newcommand{\Depth}{\operatorname{depth}}	
\newcommand{\Char}{\operatorname{char}}

%%%%Commands added by Yanagawa

%--

%\newcommand{\gr}{\mbox{gr}\,}

\newcommand{\wS}{\widetilde{S}}

%%%%----------

\newcommand{\fp}{\mathfrak{p}}

%Connecting map in LES

%For notes

 %For red comments on typos

%-------------------------------------------------------

\begin{document}
\newcommand{\tens}{\otimes}
\newcommand{\hhtest}[1]{\tau ( #1 )}
\renewcommand{\hom}[3]{\operatorname{Hom}_{#1} ( #2, #3 )}

\title{Graph Connectivity and Binomial Edge Ideals}
\author{Arindam Banerjee}
\author{Luis N\'u\~nez-Betancourt}
\maketitle

\begin{abstract}
We relate homological properties of a binomial edge ideal $\mathcal{J}_G$ to invariants that measure the connectivity of a simple graph $G$. Specifically, 
we show  if  $R/\mathcal{J}_G$ is a Cohen-Macaulay ring, then graph toughness of $G$ is exactly $\frac{1}{2}$.
We also give an inequality between the depth of $R/\mathcal{J}_G$ and the vertex-connectivity of $G$. 
In addition, we study the Hilbert-Samuel multiplicity, and the Hilbert-Kunz multiplicity of $R/\mathcal{J}_G$.

\end{abstract}

\renewcommand{\thefootnote}{\fnsymbol{footnote}} 
\footnotetext{{\bf{MSC Classes:}} 13C14,	05C40, 	05E40.}     
\footnotetext{{\bf{Key Words:}} Binomial Edge ideals, Graph Toughness, Vertex Connectivity, \& Cohen-Macaulay Rings. }     
\renewcommand{\thefootnote}{\arabic{footnote}}

\section{Introduction}

The interplay between the combinatorics of
a graph and the algebra of various ideals arising from it has been a vibrant area of research in the last two decades. The most studied ideal in this context is  the edge ideal. The interplay of 
Castelnuovo-Mumford regularity, projective dimension, and  Cohen-Macaulayness  of an edge
ideal with the combinatorics of the underlying graph has been studied
extensively. Two important examples of this research are 
Fr\"{o}berg's \cite{Fro} characterization of edge ideals with linear minimal free resolutions and Herzog and
Hibi's \cite{HH2} characterization of Cohen-Macaulay bipartite edge ideals.

The binomial edge ideal $\cJ_G$ of a simple graph $G$ was introduced independently  by Herzog, Hibi,  Hreinsd\'ottir, Kahle, and Rauh  \cite{BinomialEdgeIdeal}, and  Ohtani \cite{Ohtani}. 
 This ideal arises naturally in the study of  conditional independence ideals that are suitable to model robustness in the context of algebraic statistics \cite{BinomialEdgeIdeal}. Various aspects of the homological algebra of the binomial edge ideals have been related with the combinatorial structure of the underlying graph. For instance, the regularity of $\cJ_G$ is bounded below by the length of the largest induced path of $G$ and above by the number of vertices of $G$ \cite{MatsudaMurai}. 
In addition, Cohen-Macaulayness has been characterized for binomial edge ideals of chordal graphs in terms of the maximal cliques of the underlying graph \cite{ChordalBEI}. Furthermore, it has been proved that $\cJ_G$ is a complete intersection if and only if $G$ is a disjoint union of paths \cite{ChordalBEI,Rinaldo}. 

In this article we study the relationship of  a binomial edge ideal and the connectivity of the graph.  Specifically, we investigate interactions between toughness and  vertex connectivity of a graph with dimension and depth of $R/\cJ_G.$
Graph toughness, $\tau(G)$, of a graph was first introduced by Chv\'atal \cite{Chvatal}, and serves as a measure of connectivity for a graph (see Section \ref{SectionToughness}). This invariant can be seen as the minimum ratio of the cardinality of a set of vertices that makes $G$ disconnected by removing them, and the number of connected components in the induced graph. The relation between toughness and Hamiltonicity has been a topic of much research interest \cite{SurveyToughness}.
The vertex connectivity gives the minimum number of vertices necessary to delete from $G$ to obtain a disconnected graph.
These measures of connectivity relate in the following way:  high toughness implies high vertex connectivity (see Remark \ref{Rem both conn}). 

Our first main theorem gives a necessary condition on the toughness of $G$ for Cohen-Macaulay rings given by binomial edge ideals.

\begin{theoremx}[see Theorem \ref{Main Tech}]\label{Main}
Let $G$ be a connected graph on $[n]$  and $\cJ_G$ the corresponding binomial edge ideal in $R=K[x_1,\ldots,x_n,y_1,\ldots,y_n]$. If $R/\cJ_G$ Cohen-Macaulay, then either  $\tau(G)=\frac{1}{2}$ or $G$ is the complete graph. 
\end{theoremx}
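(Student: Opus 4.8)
The plan is to reduce to a local-cohomology / dimension-counting statement and then use the known description of the minimal primes of $\cJ_G$. Recall that the minimal primes of $\cJ_G$ are indexed by subsets $S \subseteq [n]$: for such an $S$, if $c(S)$ denotes the number of connected components of $G \setminus S$, then (by Herzog--Hibi et al.) there is an associated prime $P_S(G)$ whose height is $n + |S| - c(S)$, hence whose coheight (the dimension of $R/P_S(G)$) is $n - |S| + c(S)$. In particular $\dim R/\cJ_G = \max_{S}\,(n - |S| + c(S))$, the maximum taken over the admissible sets $S$ (including $S=\emptyset$, which gives $n+1$ when $G$ is connected). The first step is therefore to record this formula and to observe that, when $G$ is not complete, there is always a vertex cut realizing positive $|S|$.

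Next, I would invoke Cohen-Macaulayness. If $R/\cJ_G$ is Cohen-Macaulay, it is in particular equidimensional: every minimal prime $P_S(G)$ satisfies $\dim R/P_S(G) = \dim R/\cJ_G$. Combined with the fact that $S=\emptyset$ is always admissible and gives dimension $n+1$, this forces $\dim R/\cJ_G = n+1$, and hence for every admissible cut set $S$ we get the key identity
\[
n - |S| + c(S) = n+1, \qquad\text{i.e.}\qquad c(S) = |S| + 1.
\]
Now one must be careful about which $S$ are ``admissible'' (i.e., actually give a minimal prime rather than an embedded one): the combinatorial condition from \cite{BinomialEdgeIdeal} is that $S=\emptyset$ or each vertex of $S$ is a cut vertex of the graph $G\setminus (S \setminus \{v\})$ on the relevant component. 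The crucial point is that \emph{every} minimal vertex cut $T$ of $G$ (a set whose removal disconnects $G$ and no proper subset of which does) is admissible in this sense. So for every minimal cut $T$ we obtain $c(T) = |T| + 1$.

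From $c(T) = |T|+1$ for all minimal cuts, I would deduce $\tau(G) = \tfrac12$ as follows. By definition $\tau(G) = \min_{S}\, |S|/c(S)$ over all cut sets $S$ with $c(S) \ge 2$. For a minimal cut $T$ we have $|T|/c(T) = |T|/(|T|+1)$, which as a function of $|T|\ge 1$ is increasing and equals $\tfrac12$ exactly when $|T| = 1$; so it suffices to show that $G$ (being non-complete) has a cut vertex, i.e. a minimal cut of size $1$ --- wait, that is false in general (e.g. a cycle). Here is where the real argument is needed: for an \emph{arbitrary} cut set $S$, shrink it to a minimal cut $T \subseteq S$; then removing the extra $|S|-|T|$ vertices can increase the component count by at most $|S|-|T|$ each, so $c(S) \le c(T) + (|S|-|T|) = |T| + 1 + (|S| - |T|) = |S|+1$, giving $|S|/c(S) \ge |S|/(|S|+1) \ge \tfrac12$, with the bound $\tfrac12$ already achieved at a minimal cut only if that cut has size $1$. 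To get $\tau(G)=\tfrac12$ on the nose I instead argue: the condition $c(T)=|T|+1$ for \emph{all} minimal cuts $T$, together with equidimensionality, forces some minimal cut to have size exactly $1$ (otherwise one produces, via the combinatorial structure of the associated primes of a non-complete, non-cut-vertex graph, an admissible $S$ violating $c(S)=|S|+1$, contradicting equidimensionality). Then $|T|/c(T) = 1/2$ for that $T$, while the displayed inequality shows no cut set does better, so $\tau(G) = \tfrac12$.

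\textbf{Main obstacle.} The delicate point --- and the one I expect to require the careful combinatorial lemma promised by Theorem~\ref{Main Tech} --- is the last step: translating equidimensionality of $R/\cJ_G$ into the statement that $G$ must contain a cut vertex when $G$ is connected and not complete. The inequality $c(S) \le |S|+1$ for all cut sets follows cheaply from minimal-cut shrinking, but upgrading "toughness $\ge \tfrac12$" to "toughness $= \tfrac12$" means exhibiting a cut set attaining the ratio, and the natural candidate is a single cut vertex whose existence is \emph{not} automatic and must be extracted from the Cohen-Macaulay hypothesis via the structure of the (non-)minimal primes $P_S(G)$. I would handle this by analyzing a hypothetical $2$-connected non-complete $G$: pick a minimal cut $T$ with $|T|\ge 2$, and use the block/ear structure near $T$ to build an admissible set $S$ (in the sense of \cite{BinomialEdgeIdeal}) with $c(S) < |S|+1$, contradicting equidimensionality; the rest of the argument is the routine dimension bookkeeping sketched above.
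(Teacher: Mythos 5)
Your first half (equidimensionality forces $\dim(R/\cJ_G)=n+1$, hence $c(S)\le |S|+1$ for the relevant cut sets, hence $\tau(G)\ge \tfrac12$) matches the paper's route through Theorem \ref{Thm Dim} and Corollary \ref{Tough Equi}, although your justification via ``minimal-cut shrinking'' is wrong as stated: the inequality $c(S)\le c(T)+(|S|-|T|)$ fails in general, since deleting a single extra vertex can split its component into arbitrarily many pieces. The correct (and cheaper) argument is that $P_S(G)$ is a prime containing $\cJ_G$ for \emph{every} $S$, so $n+c(S)-|S|=\dim R/P_S(G)\le \dim R/\cJ_G=n+1$ directly from Remark \ref{RemDim}; no reduction to minimal cuts or to admissible sets is needed.

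The genuine gap is in the upper bound $\tau(G)\le\tfrac12$, i.e., the existence of a cut vertex. You propose to show that a $2$-connected non-complete graph cannot have equidimensional $R/\cJ_G$, by producing an admissible $S$ with $c(S)<|S|+1$. This strategy cannot work, because the implication is false: equidimensionality alone does not force a cut vertex. For example, let $G$ be the ``book'' $B_3$ on $5$ vertices (three triangles sharing a common edge $\{a,b\}$, equivalently $\cK_{2,3}$ plus the edge $ab$). This graph is $2$-connected and not complete, yet its only minimal primes are $P_\varnothing(G)$ and $P_{\{a,b\}}(G)=(x_a,y_a,x_b,y_b)$, both of dimension $6=n+1$, so $R/\cJ_G$ is equidimensional while $\tau(G)=\tfrac23$. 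This is why the paper does not argue combinatorially at this step: it uses the full strength of the $S_2$ (in particular Cohen--Macaulay) hypothesis via local cohomology. Writing $\cJ_G=P_\varnothing(G)\cap I$ with $I$ the intersection of the remaining minimal primes, Lemma \ref{lemma dim vertex-connectivity} bounds $\dim R/(P_\varnothing(G)+I)\le n-1$ when $G$ is $2$-vertex-connected, and the Mayer--Vietoris sequence then shows $H^{n+1}_{\m}(R/\cJ_G)$ decomposes as a direct sum of two nonzero modules, which is incompatible with $S_2$ by the connectedness-type result \cite[Corollary 3.7]{HHgraph} (this is Proposition \ref{Prop 2-vertex-conec}). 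Your proposal is missing this homological input entirely, and no purely combinatorial replacement based on equidimensionality can fill it.
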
 

The proof of Theorem \ref{Main} has two main ingredients. The first is a relation between the toughness of $G$ and the dimension of $R/\cJ_G$ (see Theorem \ref{Thm Dim}). This relation was motivated from the fact the dimension of $R/\cJ_G$ is given by the difference of the number of vertices removed and the number of connected components in the induced graph, while the toughness is given by its ratio. The second ingredient is the fact that if the quotient ring of  $\cJ_G$ is $S_2$, then $G$ cannot be $2$-vertex-connected (see Proposition \ref{Prop 2-vertex-conec}). Pushing this idea, we obtain our second main theorem, which establishes a stronger relation between the  depth  of $R/\cJ_G$ in terms of  vertex connectivity of $G$.

\begin{theoremx}[{see Theorem \ref{Main Depth Char p} \& \ref{Main Depth Char Zero}}]  

\label{Main Depth}
Let $G$ be a connected graph on $[n]$ and $\cJ_G$ the corresponding binomial edge ideal in $R=K[x_1,\ldots,x_n,y_1,\ldots,y_n]$. 
Suppose that $G$ is not the complete graph and that $\ell$ is the vertex connectivity of $G$. Then, 
$$\Depth(R/\cJ_G)\leq n-\ell+2\hbox{ and }\pd(R/\cJ_G)\geq n+\ell-2.$$
\end{theoremx}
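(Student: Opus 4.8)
The two inequalities are equivalent: $R$ is a polynomial ring, hence regular, so the graded Auslander--Buchsbaum formula gives $\pd_R(R/\cJ_G)+\Depth(R/\cJ_G)=\dim R=2n$, and it suffices to prove $\Depth(R/\cJ_G)\le n-\ell+2$.

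Fix a minimum vertex cut $T$ of $G$, so $|T|=\ell$ and $G-T$ has connected components $C_1,\dots,C_c$ with $c\ge 2$. Recall from \cite{BinomialEdgeIdeal} the minimal primary decomposition $\cJ_G=\bigcap_{S\in\mathcal S}P_S$, where $P_S=(x_i,y_i:i\in S)+\sum_j\cJ_{\widetilde{C^S_j}}$, the $C^S_j$ are the components of $G-S$, $\widetilde{W}$ denotes the complete graph on a vertex set $W$, and $\mathcal S$ consists of $\emptyset$ together with all $S$ such that $c_G(S\setminus\{i\})<c_G(S)$ for every $i\in S$. The prime $P_\emptyset=\cJ_{\widetilde G}$ is the ideal of $2\times 2$ minors of the generic $2\times n$ matrix, so $R/P_\emptyset$ is a Cohen--Macaulay (determinantal) ring of dimension $n+1$. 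Consider the ideal
$$\fp:=P_\emptyset+P_T=(x_i,y_i:i\in T)+\cJ_{\widetilde{[n]\setminus T}}.$$
Since $R/\fp\cong K[x_j,y_j:j\in[n]\setminus T]/\cJ_{\widetilde{[n]\setminus T}}$ is again a generic determinantal ring, $\fp$ is a prime of $R$ with $\dim R/\fp=n-\ell+1$, and $\cJ_G\subseteq P_\emptyset\subseteq\fp$. I claim the only minimal primes of $\cJ_G$ lying inside $\fp$ are $P_\emptyset$ and $P_T$: comparing degree-one parts, $P_S\subseteq\fp$ forces $S\subseteq T$; and since $T$ is a minimum vertex cut, no nonempty $S$ with $|S|<\ell$ can lie in $\mathcal S$ (such an $S$ does not disconnect $G$, so $c_G(S)=1$) and $T$ itself lies in $\mathcal S$, so the only members of $\mathcal S$ contained in $T$ are $\emptyset$ and $T$.

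Therefore $(R/\cJ_G)_\fp$ has exactly two minimal primes, the images of $P_\emptyset$ and $P_T$; each cuts out an irreducible closed subset of positive dimension (as $\ell\ge 1$ and $c\ge 2$), and their intersection is $V\big((P_\emptyset+P_T)R_\fp\big)=V(\fp R_\fp)$, the single closed point. Hence the punctured spectrum of $(R/\cJ_G)_\fp$ is disconnected, so by Grothendieck's connectedness theorem $\Depth\big((R/\cJ_G)_\fp\big)\le 1$. As $R$ is regular, projective dimension can only drop under localization, and via Auslander--Buchsbaum this reads $\Depth(R/\cJ_G)\le\Depth\big((R/\cJ_G)_\fp\big)+\dim(R/\fp)\le 1+(n-\ell+1)=n-\ell+2$. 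With the first paragraph this also gives $\pd(R/\cJ_G)\ge n+\ell-2$.

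The essential ingredients are the combinatorial description of $\operatorname{Min}(\cJ_G)$ — used to ensure that no third component of $V(\cJ_G)$ passes through $\fp$, so that the local picture there really is ``two pieces meeting at a point'' — and the connectedness theorem converting this into a depth bound; I expect pinning down exactly this local configuration at $\fp$ to be the crux, the surrounding graph combinatorics being routine. Alternatively one can run the argument through local cohomology: writing $\cJ_G=P_\emptyset\cap\big(\bigcap_{\emptyset\ne S\in\mathcal S}P_S\big)$ and feeding this into the Mayer--Vietoris sequence for $H^\bullet_\fm(-)$, the Cohen--Macaulayness of $R/P_\emptyset$ together with the fact that the two pieces meet only in dimension $n-\ell+1$ reduce the theorem to a non-surjectivity statement for a connecting map, which one controls using finiteness and vanishing properties of the modules $H^\bullet_{\cJ_G}(R)$; these are established by Frobenius and $F$-finite module techniques in positive characteristic and by $\mathcal D$-module methods or reduction to positive characteristic in characteristic zero, which plausibly accounts for the paper treating the two cases separately (Theorems \ref{Main Depth Char p} and \ref{Main Depth Char Zero}).
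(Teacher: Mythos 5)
Your proof is correct, but it takes a genuinely different route from the paper's. The paper writes $\cJ_G=P_{\varnothing}(G)\cap I$ with $I$ the intersection of all the remaining minimal primes, bounds $\dim R/(P_{\varnothing}(G)+I)\leq n-\ell+1$ (its Lemma \ref{lemma dim vertex-connectivity}, the same combinatorial input you use), and then converts this into $\cd(\cJ_G)\geq n+\ell-2$ via the Mayer--Vietoris bound $\cd(\fa\cap\fb)\geq\dim A-\dim A/(\fa+\fb)-1$; the passage from $\cd$ to $\pd$ uses the Peskine--Szpiro inequality $\cd(I)\leq\pd(R/I)$ in positive characteristic, and characteristic zero is handled by reduction mod $p$ -- exactly the bifurcation you guessed at in your closing paragraph. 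Your argument instead localizes at the single well-chosen prime $\fp=P_{\varnothing}(G)+P_T(G)$ for a minimum cut $T$, checks that exactly the two components $V(P_{\varnothing}(G))$ and $V(P_T(G))$ pass through $\fp$ and meet only at the closed point (your degree-one comparison and the minimality of $T$ do establish this, including that $P_T(G)$ is genuinely a minimal prime), and then invokes Hartshorne's connectedness theorem to get $\Depth\big((R/\cJ_G)_\fp\big)\leq 1$, finishing with $\pd_{R_\fp}\leq\pd_R$ and Auslander--Buchsbaum. What your approach buys is a completely characteristic-free proof: it needs neither Peskine--Szpiro nor the reduction to positive characteristic, so the two theorems \ref{Main Depth Char p} and \ref{Main Depth Char Zero} collapse into one statement with one proof. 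What the paper's approach buys is the intermediate (and independently interesting) lower bound on the cohomological dimension $\cd(\cJ_G)$, which your localization argument does not produce. The only cosmetic quibble is attribution: the statement you need (depth $\geq 2$ implies connected punctured spectrum) is usually credited to Hartshorne, with Grothendieck's connectedness theorem being the stronger codimension statement that subsumes it.
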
 
 
 Theorem \ref{Main} and \ref{Main Depth} establish that if $R/\cJ_G$ is Cohen-Macaulay, then the toughness is $\frac{1}{2}$ and the vertex-connectivity $1$. Therefore, we give  new combinatorial criteria to identify graphs such that $R/\cJ_G$ is not Cohen-Macaulay.

In the final section of this paper, we study invariants of $\cJ_G$.
We take advantage of the fact that binomial edge ideals can be viewed as a generalization of the ideals of maximal minors of $2 \times n$ generic matrices. 
Specifically, those ideals are the binomial edge ideals of complete graphs on $n$ vertices.
The ideals of maximal minors are very well studied objects with many nice properties \cite{Bruns}; for instance, they are Cohen-Macaulay and there are formulas for several invariants associated to them. 

We use the properties of determinantal ideals to give a formula for the Hilbert-Kunz and Hilbert-Samuel multiplicities for binomial edge ideals in terms of the combinatorics of the graph (see Theorem \ref{Thm Mult}).
As a consequence of these results, the Hilbert-Kunz multiplicity of $R/\cJ_G$ is a rational number.  Furthermore, the values of the multiplicities depend only on the structure of the graph and not on the characteristic of the coefficient field. For instance, all the binomial edge ideals associated to a Hamiltonian graph with $n$ vertices have the same multiplicities.

\section{Background}
In this section we establish terminology and recall basic results from graph theory. In addition, we recall the definition and first properties of binomial edge ideals. We make use of these preliminaries throughout all the manuscript.

\subsection{Graph terminology}\label{SectionToughness}
In this subsection we review basic definitions and results from graph theory. We refer to \cite{Bollobas} for an introduction to this theory.
In this manuscript we only consider simple undirected graphs, whose vertices belong to $[n]:=\{1,\ldots, n\}$.
Given a graph $G$ and a subset of vertices $S$,  $c(S)$ denotes the number of connected components of $G\setminus S.$

\begin{definition}
We say that $G$ is \emph{$\ell$-vertex-connected} if $\ell< n$ and for every subset $S$ of vertices such that $|S|<\ell$, the induced graph $G\setminus S$ is connected. The \emph{ vertex-connectivity of $G$, {\cb denoted by $\kappa(G)$,}} is defined as the maximum integer $\ell$ such that $G$ is $\ell$-vertex-connected. 
\end{definition}

We now present definitions and properties related to graph toughness. We refer to \cite{SurveyToughness} for a survey on this topic.

\begin{definition}[{\cite{Chvatal}}]
We say that a connected graph is \emph{$t$-tough} if for every subset $S\neq\varnothing$ such that $c(S)\geq 2$, we have $t\cdot c(S)\leq |S|.$
We define the toughness of $G$, $\tau(G),$ as the maximum value of $t$ for which $G$ is $t$-tough.
\end{definition}

\begin{remark}\label{Toughness Frac}
If a graph, $G$, is not complete, then 
$$
\tau(G)=\min\left\{\frac{|S|}{c(S)}: c(S)\geq 2\right\}.
$$
\end{remark}

\begin{example}
The following is a list of the toughness of certain classes of graphs:
\begin{enumerate} 
\item If $\cK_n$ is the complete graph, then $\tau(\cK_n)=\infty$ {\cb  and $\kappa(G)=n-1$}.
\item If $K_{m,n}$ is the complete bipartite graph with $2\leq m\leq n,$ then $\tau(\cK_{m,n})=\frac{m}{n}$ {\cb and $\kappa(G)=m$}. 
\item If $P_n$ is the $n$-path, with $3\leq n$,  then $\tau(P_n)=\frac{1}{2}$ {\cb and $\kappa(G)=1$}.
\item If $C_n$ is the $n$-cycle, with $4\leq n,$ then $\tau(C_n)=1$ {\cb  and
$\kappa(G)=2.$}
\item If $G$ is a Hamiltonian graph, then $\tau(G)\geq 1$ {\cb  and
$\kappa(G)\geq 2.$}
\end{enumerate}
\end{example}

{\cb
\begin{remark}\label{Rem both conn}
Suppose that $G$ is not the complete graph.
If $G$  is $t$-tough, with $t>0,$ then $ \frac{|S|}{c(S)}\geq t$ for every subset $S\subseteq [n]$ such that $c(S)\geq 2.$ Then, 
$|S|\geq t\cdot c(s)  $ for every set such that $c(S)\geq 2.$ Then, $2t\geq |S|$ is a necessary condition to have $c(S)\geq 2.$ This means that if $k<2t$ vertices are removed, then $c(S)=1,$ so $G\setminus S$ remains connected. Hence, $G$ is a $\lceil 2t\rceil$-vertex-connected graph.
\end{remark}
}
\subsection{Binomial edge ideals}
Throughout this paper $R$ denotes $K[x_1,\ldots, x_n,y_1,\ldots, y_n]$, a polynomial ring in $2n$ variables over a field $K$.

\begin{definition}
Let $G$ be a graph on $[n]$. We define the \emph{binomial edge ideal corresponding to $G$} by
$$
\cJ_G=(x_iy_j-x_jy_i: \{i,j\}\in G \hbox{ \& }i\neq j).
$$
\end{definition}

\begin{example}
Let $G$ be the complete graph on  $[n]$. The binomial edge ideal corresponding to $G$ is
$$
\cJ_G=(x_iy_j-x_jy_i: i\neq j)=I_2(X),
$$
where $X=\begin{pmatrix}
x_1 &\ldots & x_n\\
y_1 & \ldots & y_n
\end{pmatrix}.$
\end{example}

\begin{definition}
Let $G$ be a graph on $[n],$ and $S\subseteq [n]$.
Let $G_1,\ldots, G_{c(S)}$  denote the connected components of $G\setminus S.$ Let $\widetilde{G}_i$ denote the complete graph on the vertices of $G_i.$ We set
$$
P_S(G)=\left( \bigcup_{i\in S}\{x_i,y_i\}, \cJ_{\widetilde{G}_1},\ldots, \cJ_{\widetilde{G}_{c(S)}}\right)R.$$
\end{definition}
It is well-known that $P_S(G)$ is a prime ideal for every $S\subseteq [n]$.
Furthermore, these prime ideals play a very important role to study $\cJ_G.$

\begin{theorem}[{\cite[Theorem 3.2]{BinomialEdgeIdeal}}]\label{IntersectionPrimes}
Let $G$ be a graph on $[n]$, and $\cJ_G$ its binomial edge ideal in $R$.
Then, $$\cJ_G=\bigcap_{S\subseteq [n]} P_S(G).$$
\end{theorem}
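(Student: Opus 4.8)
The statement to prove is Theorem \ref{IntersectionPrimes}: $\cJ_G = \bigcap_{S \subseteq [n]} P_S(G)$. This is a cited result, but let me think about how I'd prove it.

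The plan is to show both inclusions. The inclusion $\cJ_G \subseteq \bigcap_S P_S(G)$ is the easy direction: I need to verify each generator $x_i y_j - x_j y_i$ (for $\{i,j\} \in G$) lies in each $P_S(G)$. Given $S$, if either $i$ or $j$ is in $S$, then $x_i, y_i$ (or $x_j, y_j$) are generators of $P_S(G)$, so done. If neither $i$ nor $j$ is in $S$, then since $\{i,j\}$ is an edge of $G$, the vertices $i$ and $j$ lie in the same connected component $G_k$ of $G \setminus S$, hence $\{i,j\}$ is an edge of the complete graph $\widetilde{G}_k$, so $x_i y_j - x_j y_i \in \cJ_{\widetilde{G}_k} \subseteq P_S(G)$.

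For the reverse inclusion $\bigcap_S P_S(G) \subseteq \cJ_G$, the cleaner route is to show that every $P_S(G)$ that appears is actually an associated prime (or at least contains a minimal prime) of $\cJ_G$, and conversely that every minimal prime of $\cJ_G$ is of the form $P_S(G)$; then $\cJ_G = \sqrt{\cJ_G} = \bigcap(\text{minimal primes})$, provided one also knows $\cJ_G$ is radical. So the key steps are: (1) establish that $\cJ_G$ is a radical ideal — this typically follows from a Gröbner basis argument showing the initial ideal with respect to a suitable (lexicographic) term order is squarefree, or from the fact that $\cJ_G$ is a reduced intersection of primes computed directly; (2) show each minimal prime of $\cJ_G$ has the form $P_S(G)$ for some $S$; (3) observe the (finitely many distinct) minimal primes among $\{P_S(G)\}$ already have intersection equal to $\bigcap_{S\subseteq[n]} P_S(G)$, since any non-minimal $P_S(G)$ contains a minimal one and is thus redundant in the intersection. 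Step (2) is best done by induction on the number of vertices or edges: pick a vertex $v$; analyze $\cJ_G$ by comparing it with $\cJ_{G'}$ where $G'$ removes or modifies edges at $v$, using a colon/quotient decomposition like $\cJ_G = (\cJ_H + (x_v, y_v)) \cap (\cJ_{G_v})$ where $G_v$ is the graph obtained by adding edges to make the neighborhood of $v$ a clique and then $v$ becomes a "free" vertex — this is the standard $\cJ_G = ((x_v,y_v) + \cJ_{G \setminus v}) \cap \cJ_{\widehat{G}}$-type splitting.

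The main obstacle is step (1)–(2) combined: proving radicality and simultaneously identifying the minimal primes. The technical heart is the decomposition lemma that lets one induct: for a vertex $v$, one writes $\cJ_G$ as an intersection involving $(x_v, y_v) + \cJ_{G \setminus \{v\}}$ on one hand and $\cJ_{G_v}$ (where $G_v$ adds all edges among neighbors of $v$) on the other, and then checks that these pieces' prime decompositions, given by the inductive hypothesis, reassemble into exactly $\{P_S(G) : S \subseteq [n]\}$. Keeping careful track of which $S$ contain $v$ and which don't, and verifying the intersection identity at the level of ideals (not just radicals) is the delicate bookkeeping. I expect the Gröbner-basis approach to radicality (exhibiting a quadratic squarefree initial ideal under lex order with $x_1 > \cdots > x_n > y_1 > \cdots > y_n$) to be the most efficient way to dispatch step (1), after which the set-theoretic description reduces to showing the minimal primes are exactly the $P_S(G)$.

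\begin{proof}[Proof sketch]
We prove the two inclusions separately.

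For $\cJ_G \subseteq \bigcap_{S} P_S(G)$, fix $S \subseteq [n]$ and an edge $\{i,j\} \in G$. If $i \in S$ or $j \in S$, then $x_i y_j - x_j y_i$ lies in the ideal $(\bigcup_{k \in S}\{x_k, y_k\})R \subseteq P_S(G)$. Otherwise $i$ and $j$ are distinct vertices of $G \setminus S$ joined by an edge, so they lie in a common connected component $G_\ell$; then $\{i,j\}$ is an edge of $\widetilde{G}_\ell$ and $x_i y_j - x_j y_i \in \cJ_{\widetilde{G}_\ell} \subseteq P_S(G)$. Hence every generator of $\cJ_G$ lies in $P_S(G)$.

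For the reverse inclusion, one first shows that $\cJ_G$ is a radical ideal, for instance by checking that with respect to the lexicographic order induced by $x_1 > \cdots > x_n > y_1 > \cdots > y_n$ the natural generators form a Gr\"obner basis whose initial ideal is generated by squarefree monomials. It then suffices to identify the minimal primes of $\cJ_G$. One argues by induction on $|V(G)|$ (or on the number of edges), using a standard decomposition at a vertex $v$: writing $G_v$ for the graph obtained from $G$ by adding all edges among the neighbors of $v$, one has
$$\cJ_G = \bigl( (x_v, y_v) + \cJ_{G \setminus \{v\}} \bigr) \cap \cJ_{G_v},$$
and the inductive hypothesis applied to $G \setminus \{v\}$ and to $G_v$ expresses each factor as an intersection of primes of the form $P_T(\,\cdot\,)$. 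Recombining these, and separating the cases $v \in S$ and $v \notin S$, one checks that the minimal primes of $\cJ_G$ are exactly the $P_S(G)$ for suitable $S \subseteq [n]$. Since any $P_S(G)$ that is not minimal contains a minimal one and is therefore redundant, we conclude $\cJ_G = \sqrt{\cJ_G} = \bigcap_{S \subseteq [n]} P_S(G)$.
\end{proof}
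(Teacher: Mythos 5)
This statement is quoted by the paper from \cite[Theorem 3.2]{BinomialEdgeIdeal} and is not proved in the manuscript, so the only comparison available is with the original source. Your easy inclusion $\cJ_G\subseteq\bigcap_S P_S(G)$ is complete and correct, and your overall strategy for the converse (radicality plus identification of the minimal primes) is the same strategy as in the original paper. However, there are two genuine gaps in the hard direction as you have sketched it.

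First, your stated route to radicality is wrong as written: for an arbitrary graph the natural generators $x_iy_j-x_jy_i$ do \emph{not} form a Gr\"obner basis with respect to the lexicographic order $x_1>\cdots>x_n>y_1>\cdots>y_n$; they do so precisely when $G$ is closed with respect to the labelling. For general $G$ the reduced Gr\"obner basis consists of binomials attached to \emph{admissible paths}, and one must compute this larger basis and check that its initial terms are still squarefree before concluding that $\cJ_G$ is radical. Asserting that the given quadrics already suffice would fail on, e.g., a path labelled $1$--$3$--$2$. Second, the inductive step via the decomposition $\cJ_G=\bigl((x_v,y_v)+\cJ_{G\setminus\{v\}}\bigr)\cap\cJ_{G_v}$ (Ohtani's lemma) is only a genuine reduction when $v$ is non-simplicial: if the neighborhood of $v$ is already a clique then $G_v=G$ and the induction does not progress, so you need the base case that graphs all of whose vertices are simplicial are disjoint unions of cliques (where $\cJ_G$ is a prime determinantal ideal tensored over components), and you need to verify Ohtani's lemma itself, which is not obvious and is the actual technical heart. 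The recombination bookkeeping identifying the minimal primes of the two factors with the ideals $P_S(G)$, split according to $v\in S$ versus $v\notin S$, is also left entirely unverified. As a sketch of a known proof architecture this is reasonable, but it does not yet constitute a proof.
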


\begin{proposition}[{\cite[Corollary 3.9]{BinomialEdgeIdeal}}]\label{MinorMinimal}
If $G$ is a connected graph on $[n]$, then
$P_\varnothing (G)$ is a minimal prime of $\cJ_G.$
\end{proposition}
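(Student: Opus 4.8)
The plan is to deduce the statement directly from Theorem~\ref{IntersectionPrimes} together with the elementary fact that the ideal of $2\times2$ minors of a generic $2\times n$ matrix contains no variable. First I would record the identification: since $G$ is connected, $G\setminus\varnothing=G$ has a single connected component, whose completion is the complete graph on $[n]$, so that
$P_\varnothing(G)=\cJ_{\cK_n}=I_2(X)$, where $X=\begin{pmatrix} x_1 & \cdots & x_n\\ y_1 & \cdots & y_n\end{pmatrix}$. Because $\cJ_G\subseteq P_\varnothing(G)$ and $P_\varnothing(G)$ is prime, $P_\varnothing(G)$ contains some minimal prime $\mathfrak{p}$ of $\cJ_G$.

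Next I would use Theorem~\ref{IntersectionPrimes}: the equality $\cJ_G=\bigcap_{S\subseteq[n]}P_S(G)$ is a \emph{finite} intersection of prime ideals, and a prime ideal containing a finite intersection of ideals contains one of them; hence every minimal prime of $\cJ_G$ equals $P_S(G)$ for some $S\subseteq[n]$. Applying this to $\mathfrak{p}$, we get $\mathfrak{p}=P_S(G)$ for some $S$, and therefore $P_S(G)\subseteq P_\varnothing(G)$.

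It remains to show $S=\varnothing$. Suppose $S\neq\varnothing$ and fix $i\in S$; then $x_i\in P_S(G)\subseteq P_\varnothing(G)=I_2(X)$. But $I_2(X)$ is a homogeneous prime ideal generated in degree $2$, so it contains no nonzero element of degree $1$ (equivalently, $R/I_2(X)$ is a domain in which the image of $x_i$ is nonzero, since the $K$-point with all coordinates equal to $1$ lies on $V(I_2(X))$ but not on $V(x_i)$). This contradiction forces $S=\varnothing$, so $\mathfrak{p}=P_\varnothing(G)$, which is then a minimal prime of $\cJ_G$.

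I do not expect a genuine obstacle here: once Theorem~\ref{IntersectionPrimes} is in hand, the only content is the standard observation that a maximal-minors ideal of a generic matrix contains no variable. I considered an alternative route through heights — one computes $\Ht P_S(G)=n+|S|-c(S)$, so $P_S(G)\subseteq P_\varnothing(G)$ would force $c(S)\geq|S|+1$ — but this merely restricts the offending $S$ (stars satisfy $c(S)\geq|S|+1$) without eliminating it, so the degree/linear-form argument above is the efficient one.
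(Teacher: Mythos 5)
Your proof is correct. Note that the paper itself offers no argument for this statement --- it is quoted as \cite[Corollary 3.9]{BinomialEdgeIdeal} --- so there is nothing internal to compare against; your derivation is a clean, self-contained way to recover the special case $S=\varnothing$ from Theorem \ref{IntersectionPrimes}. Each step checks out: every minimal prime of a finite intersection of primes is one of them, so a minimal prime $\mathfrak{p}\subseteq P_\varnothing(G)$ must equal some $P_S(G)$, and $S\neq\varnothing$ is impossible because $P_S(G)$ would then contain a variable while $P_\varnothing(G)=I_2(X)$ is a homogeneous ideal generated in degree $2$ (equivalently, the evaluation at the all-ones point kills every $2\times 2$ minor but not $x_i$), so its degree-one piece is zero. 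The height computation you mention as an alternative is indeed weaker here, and your linear-form argument is the right choice.
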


\begin{remark}\label{RemDim}
From Theorem \ref{IntersectionPrimes}, we obtain 
$$
\dim R/\cJ_G=\max\{n-c(S)+|S|: S\subseteq [n]\}
$$
because $\dim R/P_S(G)=n-c(S)+|S|$ \cite[Corollary 3.3]{BinomialEdgeIdeal}. 
If $G$ is connected and $S\subsetneq [n]$ is such that $c(S)=1,$ then 
$P_\varnothing(G)\subseteq P_S(G).$ Therefore, $P_S(G)$ cannot be a minimal prime. 
Hence,
$$\cJ(S)=\left(\bigcap_{c(S)\geq 2}P_S(G)\right)\bigcap P_\varnothing (G)$$ and 
$$
\dim R/\cJ_G=\max\{n+c(S)-|S|: c(S)\geq 2 \hbox{ or }S=\varnothing\}
$$
\end{remark}

\section{Graph connectivity and homological properties of binomial edge ideals}
In this section we show Theorems \ref{Main} and \ref{Main Depth}, which are the main results of this paper.

\subsection{Graph Toughness and  binomial edge ideals}

In this subsection we establish relations between graph toughness, and different aspects related to the dimension and depth of a binomial edge ideal. These relations are necessary ingredients to prove Theorem \ref{Main}. 
We start by observing restrictions for $1$-tough graphs.

\begin{proposition}\label{equiv 1-tough}
Let $G$ be a connected graph on $[n]$ and $\cJ_G$ the corresponding binomial edge ideal in $R.$ 
Then, the following are equivalent:
\begin{enumerate}
\item $G$ is $1$-tough;
\item $\dim(R/\cJ_G)=n+1$ and $P_\varnothing(G)$ is the only minimal prime of dimension  $n+1.$
\end{enumerate}
\end{proposition}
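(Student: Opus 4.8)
The plan is to exploit the formula from Remark~\ref{RemDim}, namely
$\dim R/\cJ_G = \max\{n+c(S)-|S| : c(S)\geq 2 \text{ or } S=\varnothing\}$,
together with the fact that $\dim R/P_\varnothing(G) = n+1$ (this is the $S=\varnothing$ term, since $c(\varnothing)=1$). So $\dim R/\cJ_G \geq n+1$ always holds for connected $G$, and equality of dimension is the statement that no set $S$ with $c(S)\geq 2$ produces a larger value. Moreover $P_\varnothing(G)$ being the \emph{only} minimal prime of top dimension $n+1$ is equivalent to saying every $S$ with $c(S)\geq 2$ satisfies the \emph{strict} inequality $n+c(S)-|S| < n+1$, i.e. $|S| > c(S)-1$, i.e. $|S|\geq c(S)$.

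For (1)$\Rightarrow$(2): Assume $G$ is $1$-tough. Then for every $S$ with $c(S)\geq 2$ we have $|S|\geq 1\cdot c(S) = c(S)$, hence $n+c(S)-|S|\leq n < n+1$. Combined with the $S=\varnothing$ term giving exactly $n+1$, we get $\dim R/\cJ_G = n+1$; and since every competing prime $P_S(G)$ with $c(S)\geq 2$ has dimension $\leq n$, none of them can be a minimal prime of dimension $n+1$, so $P_\varnothing(G)$ is the unique such minimal prime. (One should note $P_\varnothing(G)$ is indeed a minimal prime by Proposition~\ref{MinorMinimal}.)

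For (2)$\Rightarrow$(1): Assume $\dim R/\cJ_G = n+1$ and $P_\varnothing(G)$ is the only minimal prime of that dimension. Take any $S\neq\varnothing$ with $c(S)\geq 2$; I want $|S|\geq c(S)$. We know $P_S(G)$ is a prime containing $\cJ_G$ of dimension $n+c(S)-|S|$, so it contains some minimal prime, forcing $n+c(S)-|S|\leq \dim R/\cJ_G = n+1$, i.e. $|S|\geq c(S)-1$. If equality $|S|=c(S)-1$ held, then $\dim R/P_S(G) = n+1$, and since $P_S(G)$ itself would then be a minimal prime of $\cJ_G$ of dimension $n+1$ (any prime of top dimension containing $\cJ_G$ must be minimal), this would contradict uniqueness of $P_\varnothing(G)$ — unless $P_S(G)=P_\varnothing(G)$, which is impossible since $S\neq\varnothing$. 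Hence $|S|\geq c(S)$ for all such $S$, which is exactly $1$-toughness.

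The main thing to be careful about is the last implication: one needs that a prime $P_S(G)\supseteq \cJ_G$ of dimension equal to $\dim R/\cJ_G$ is necessarily a \emph{minimal} prime of $\cJ_G$ (so that it must appear in the decomposition of Theorem~\ref{IntersectionPrimes} and the uniqueness hypothesis applies), and that distinct subsets can in principle give the same $P_S(G)$, so the argument must rule out $P_S(G)=P_\varnothing(G)$ — which is immediate from comparing the generators $\bigcup_{i\in S}\{x_i,y_i\}$. I do not anticipate a genuine obstacle here; the proof is essentially bookkeeping with the dimension formula, and the only subtlety is keeping the strict-versus-weak inequalities straight.
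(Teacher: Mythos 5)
Your proof is correct and follows essentially the same route as the paper's (much terser) argument: both reduce the statement to the equivalence between $1$-toughness and the inequality $\dim R/P_S(G)=n+c(S)-|S|\leq n$ for all $S$ with $c(S)\geq 2$, and then invoke the dimension formula of Remark \ref{RemDim}. Your additional care in the (2)$\Rightarrow$(1) direction --- checking that a prime of top dimension containing $\cJ_G$ is necessarily minimal and that $P_S(G)\neq P_\varnothing(G)$ for $S\neq\varnothing$ --- is sound and fills in details the paper leaves implicit.
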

\begin{proof}
A graph is $1$-tough if and only if $c(S)\leq |S|$ for every $S$ such that $c(S)\geq 2.$
 This happens if and only
$\dim P_S(G)=n+c(s)-|S|\leq n$ for 
$c(S)\geq 2.$ 
Then, the result follows from Remark \ref{RemDim}.
\end{proof}

As an immediate consequence of Proposition \ref{equiv 1-tough}, we obtain a characterization binomial edge ideals that are equidimensional and come from $1$-tough graphs.

\begin{remark}\label{Rem 1-tough equi}
Let $G$ be a connected graph on $[n]$ and $\cJ_G$ the corresponding binomial edge ideal in $R.$ Suppose that $G$ is $1$-tough.
Then, the following are equivalent:
\begin{enumerate}
\item $R/\cJ_G$ is a Cohen-Macaulay ring;
\item $R/\cJ_G$ is an equidimensional ring;
\item $R/\cJ_G$ is a domain;
\item $G$ is the complete graph.
\end{enumerate}
\end{remark}

We now present a theorem that gives bounds on the dimension of $R/\cJ_G$ and the toughness of $G.$ As a consequence, the equidimensionality of $R/\cJ_G$ puts strong restrictions on the  toughness of $G$. This plays a key role in the proof of Theorem \ref{Main}.

\begin{theorem}\label{Thm Dim}
Let $G$ be a connected graph on $[n]$ and $\cJ_G$ the corresponding binomial edge ideal in $R.$ 
Suppose that $\tau(G)<1.$
Then,
$$n+\frac{1-\tau(G)}{\tau(G)}\leq \dim(R/\cJ_G)\ls n+(n-1)\cdot (1-\tau(G)).$$
\end{theorem}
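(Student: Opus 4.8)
The plan is to derive both inequalities directly from the formula
$$\dim(R/\cJ_G)=\max\{\,n+c(S)-|S| : c(S)\geq 2 \text{ or } S=\varnothing\,\}$$
recorded in Remark \ref{RemDim}, combined with Remark \ref{Toughness Frac}, which says that for a non-complete graph $\tau(G)=\min\{|S|/c(S) : c(S)\geq 2\}$. Since $\tau(G)<1$, the graph is not complete, so both descriptions are available.

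For the lower bound, I would pick a set $S_0$ achieving the minimum in Remark \ref{Toughness Frac}, so $|S_0|=\tau(G)\,c(S_0)$ with $c(S_0)\geq 2$. Plugging this $S_0$ into the dimension formula gives
$$\dim(R/\cJ_G)\geq n+c(S_0)-|S_0| = n+c(S_0)\bigl(1-\tau(G)\bigr).$$
Since $\tau(G)<1$ the factor $1-\tau(G)$ is positive, so this is minimized by taking $c(S_0)$ as small as possible; the smallest admissible value is $c(S_0)\geq 2$. But I want the bound $n+\frac{1-\tau(G)}{\tau(G)}$, i.e. $n + \bigl(1-\tau(G)\bigr)/\tau(G)$; writing the right-hand side as $n+c(S_0)(1-\tau(G))$ with $c(S_0)=1/\tau(G)$ suggests the cleaner route is to note $c(S_0)=|S_0|/\tau(G)\geq 1/\tau(G)$ because $|S_0|\geq 1$. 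Substituting $c(S_0)\geq 1/\tau(G)$ into $n+c(S_0)(1-\tau(G))$ yields exactly $n+\frac{1-\tau(G)}{\tau(G)}$, as desired.

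For the upper bound, I would argue that for \emph{every} $S$ with $c(S)\geq 2$ one has $|S|\geq \tau(G)\,c(S)$, hence
$$n+c(S)-|S|\leq n+c(S)\bigl(1-\tau(G)\bigr).$$
Now I need an a priori upper bound on $c(S)$: removing $|S|$ vertices from a graph on $n$ vertices leaves at most $n-|S|$ components, and since $c(S)\geq 2$ forces $|S|\geq 1$, and in fact $|S|\geq \tau(G)\,c(S)$ gives $c(S)\leq (n-|S|)\leq n-\lceil \tau(G)c(S)\rceil$; more simply, $c(S)\le n-|S|\le n-1$ whenever $c(S)\ge 2$. So $n+c(S)(1-\tau(G))\leq n+(n-1)(1-\tau(G))$. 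The case $S=\varnothing$ contributes only $\dim R/P_\varnothing(G)=n+1\leq n+(n-1)(1-\tau(G))$ when $\tau(G)\le 1-\tfrac{1}{n-1}$; I should double-check that this is consistent — actually the case $S=\varnothing$ gives $n+1$, and one should verify $n+1$ does not exceed the claimed upper bound, or handle it by noting that $\tau(G)<1$ together with $n\ge 2$. This bookkeeping on the $S=\varnothing$ term and on the sharpest bound for $c(S)$ is where I expect the only real friction; everything else is a one-line substitution into the two displayed formulas.

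Taking the maximum over all admissible $S$ then gives both inequalities simultaneously, completing the proof.
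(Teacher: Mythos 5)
Your argument is the same as the paper's: both bounds are read off from the formula $\dim(R/\cJ_G)=\max\{n+c(S)-|S| : c(S)\geq 2 \text{ or } S=\varnothing\}$ of Remark \ref{RemDim} together with $\tau(G)=\min\{|S|/c(S):c(S)\geq 2\}$, the lower bound by evaluating at a minimizing set $S_0$ and using $c(S_0)=|S_0|/\tau(G)\geq 1/\tau(G)$, the upper bound by using $|S|\geq\tau(G)c(S)$ and $c(S)\leq n-1$. That part is correct and matches the paper exactly.

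The one step you flagged but did not close --- that the $S=\varnothing$ term $n+1$ is dominated by $n+(n-1)(1-\tau(G))$ --- genuinely needs an argument, and the hint you offer (``$\tau(G)<1$ together with $n\geq 2$'') is not enough by itself: a priori a real number $\tau(G)$ just below $1$ would make $(n-1)(1-\tau(G))$ arbitrarily small. What saves you is that $\tau(G)$ is attained: there is $\wS$ with $c(\wS)\geq 2$ and $\tau(G)=|\wS|/c(\wS)$, and $\tau(G)<1$ forces $|\wS|\leq c(\wS)-1$, so
$$\tau(G)\leq\frac{c(\wS)-1}{c(\wS)}=1-\frac{1}{c(\wS)}\leq 1-\frac{1}{n-1},$$
using $c(\wS)\leq n-1$. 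Hence $(n-1)(1-\tau(G))\geq 1$ and $n+1\leq n+(n-1)(1-\tau(G))$, which is exactly how the paper disposes of the $S=\varnothing$ term. With that line inserted, your proof is complete.
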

\begin{proof}
Since $\tau(G)<1$, $G$ is not a complete graph. Then, $$
\tau(G)=\min\left\{\frac{|S|}{c(S)}: c(S)\geq 2\right\}
$$
by Remark \ref{Toughness Frac}.

On one hand, we have
$\tau(G)\leq \frac{|S|}{c(S)}$
for every  $S\subsetneq [n]$ such that $c(S)\geq 2.$ 
Then, $\tau(G)  \cdot c(S)\leq |S|,$ and so,
$c(S)-|S|\leq (1-\tau(G)) \cdot c(S).$
We note that $ c(S)\leq n-1$ for $S\subsetneq[n].$ Then,
\begin{equation}
\label{eq c-s}
c(S)-|S|\leq (n-1)\cdot (1-\tau(G)).
\end{equation}
for every $S$ such that $c(S)\geq 2.$
Therefore, 
$$\max\{ n+c(S)-|S|: c(S)\geq 2\} \leq n+(n-1)\cdot (1-\tau(G))$$ 
by \ref{eq c-s}.

Since $\tau(G)<1,$ we have  $\tau(G)\leq 1-\frac{1}{n-1}$ from {\cb Remark \ref{Toughness Frac}} because $c(S)\leq n-1$.
Then, $$n+1\leq n+(n-1)\cdot (1-\tau(G)).$$ 
Hence, 
$$
\dim(R/\cJ_G)  =\max\{n+c(S)-|S|: c(S)\geq 2 \hbox{ or }S=\varnothing \} \leq n+(n-1)\cdot (1-\tau(G))
$$
by Remark \ref{RemDim}.

On the other hand, there exists $ \wS\subsetneq [n]$ such that $c(\wS)\geq 2$ and $\tau(G)=\frac{|\wS|}{c(\wS)}$ by Remark \ref{Toughness Frac}.
Then, $ c(\wS)=\frac{1}{\tau(G)}|\wS|,$ and so, $ |\wS|\left(\frac{1-\tau(G)}{\tau(G)}\right)= c(\wS)-|\wS|.$ 
Since $|\wS|\geq 1,$ we have 
$\frac{1-\tau(G)}{\tau(G)} \leq c(\wS)-|\wS|.$ 
Therefore, 
$$n+\frac{1-\tau(G)}{\tau(G)} \leq n+c(\wS)-|\wS|\leq \dim(R/\cJ_G)$$
by Theorem \ref{IntersectionPrimes} (see also Remark \ref{RemDim}).
\end{proof}

As a consequence of the previous theorem, we have the dimension of $R/\cJ_G$ imposes restrictions on the toughness of $G$.

\begin{corollary}\label{Cor tau G}
Let $G$ be a connected graph on $[n]$ and $\cJ_G$ the corresponding binomial edge ideal in $R.$ 
Then,
$$
\frac{1}{\dim(R/\cJ_C)-n+1}\leq \tau(G).$$
In addition, if $G$ is not $1$-tough, then
$$
\tau(G)\leq \frac{\dim(R/\cJ_G)-1}{n-1}.
$$
\end{corollary}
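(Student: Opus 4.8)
The plan is to read off both inequalities from Theorem \ref{Thm Dim} together with the basic observation that $\dim(R/\cJ_G)\geq n+1$ whenever $G$ is connected. Throughout I would abbreviate $d=\dim(R/\cJ_G)$ and $\tau=\tau(G)$. The bound $d\geq n+1$ comes for free: since $G$ is connected, Proposition \ref{MinorMinimal} says $P_\varnothing(G)$ is a minimal prime of $\cJ_G$, and the dimension formula of Remark \ref{RemDim} gives $\dim R/P_\varnothing(G)=n+c(\varnothing)-|\varnothing|=n+1$. In particular $d-n+1\geq 2>0$, so the fraction $\tfrac{1}{d-n+1}$ appearing in the first inequality is well defined and positive.

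For the first inequality, if $G$ is complete then $\tau=\infty$ and $\tfrac{1}{d-n+1}\leq\tau$ holds trivially, so I assume $G$ is not complete; then $0<\tau<\infty$ and it suffices to prove $\tfrac1\tau\leq d-n+1$, after which inverting (both sides being positive) gives $\tau\geq\tfrac{1}{d-n+1}$. I split on whether $G$ is $1$-tough. If $1\leq\tau$, then $\tfrac1\tau\leq 1\leq 2\leq d-n+1$. If $\tau<1$, then Theorem \ref{Thm Dim} applies and its lower bound $n+\tfrac{1-\tau}{\tau}\leq d$ rearranges to $\tfrac1\tau=\tfrac{1-\tau}{\tau}+1\leq (d-n)+1$. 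Either way $\tfrac1\tau\leq d-n+1$, as required.

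For the second inequality the hypothesis is that $G$ is not $1$-tough, which forces $\tau<1$ (and in particular $G$ is not complete). The key point is that the right-hand side already exceeds $1$: from $d\geq n+1$ we get $d-1\geq n>n-1$, hence $\tfrac{d-1}{n-1}\geq\tfrac{n}{n-1}=1+\tfrac{1}{n-1}>1$. Combining this with $\tau<1$ yields $\tau<1<\tfrac{d-1}{n-1}$, which is exactly the claimed bound. (If one prefers to stay inside Theorem \ref{Thm Dim}, the inequality $d>n$ can instead be extracted from its lower bound, since $\tfrac{1-\tau}{\tau}>0$ when $0<\tau<1$.)

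I do not expect a real obstacle: both parts are short rearrangements built on Theorem \ref{Thm Dim} and the inequality $d\geq n+1$. The only points needing a little care are the bookkeeping of the edge cases where $\tau\geq 1$ (so that the first inequality is neither vacuous nor mishandled when $\tau=\infty$ for the complete graph) and recording $d\geq n+1$ at the outset, so that every denominator is positive and the second bound is seen to follow — indeed rather loosely — from $\tau<1$.
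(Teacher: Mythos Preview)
Your proof is correct and, for the first inequality, follows the paper exactly: reduce to the case $\tau(G)<1$ using $d\geq n+1$, then rearrange the lower bound of Theorem \ref{Thm Dim}.

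For the second inequality you take a slightly different, and in fact shorter, route. The paper invokes Theorem \ref{Thm Dim} for both bounds; the upper bound there gives $\tau\leq \tfrac{2n-1-d}{n-1}$, which is indeed at most $\tfrac{d-1}{n-1}$ once $d\geq n$. You instead observe directly that $\tfrac{d-1}{n-1}\geq\tfrac{n}{n-1}>1>\tau$, so the stated inequality is already a consequence of $\tau<1$ and $d\geq n+1$ alone, with no need for Theorem \ref{Thm Dim}. This is a valid and cleaner shortcut; it also reveals that the second bound in the corollary, as written, is weaker than what Theorem \ref{Thm Dim} actually yields.
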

\begin{proof}
Since $\dim(R/\cJ_C)\geq n+1,$ we have  $\frac{1}{\dim(R/\cJ_C)-n+1}\leq \frac{1}{2}$. 
{\cb We note that if $G$ is not $1$-tough, then $\tau(G)<1.$}
Then, it suffices to show the statements for $\tau(G)<1.$
The inequalities follow directly from Theorem \ref{Thm Dim}.
\end{proof}

We now get a corollary that plays an important role in Theorem \ref{Main}.
In particular, the following result shows that if $R/\cJ_G$ is Cohen-Macaulay, then $\frac{1}{2}\leq \tau(G)$.

\begin{corollary}\label{Tough Equi}
Let $G$ be a connected graph on $[n]$ and $\cJ_G$ the corresponding binomial edge ideal in $R.$
If $R/\cJ_G$ is equidimensional,  either $G$ is the complete graph or $\frac{1}{2}\leq \tau(G)<1.$ 
\end{corollary}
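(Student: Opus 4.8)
The plan is to deduce Corollary \ref{Tough Equi} directly from Corollary \ref{Cor tau G}, using the observation that equidimensionality forces the dimension of $R/\cJ_G$ to be as small as possible. First I would note that by Proposition \ref{MinorMinimal}, $P_\varnothing(G)$ is a minimal prime of $\cJ_G$, and $\dim R/P_\varnothing(G) = n+1$ since $c(\varnothing)=1$ (this is the determinantal case, $I_2(X)$ on the connected component, giving the height of a $2\times n$ generic determinantal ideal). Hence $\dim R/\cJ_G \geq n+1$ always. If $R/\cJ_G$ is equidimensional, then \emph{every} minimal prime has dimension equal to $\dim R/\cJ_G$; in particular, if $\dim R/\cJ_G > n+1$, then $P_\varnothing(G)$ would be a minimal prime of dimension strictly less than $\dim R/\cJ_G$, contradicting equidimensionality. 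Therefore equidimensionality forces $\dim R/\cJ_G = n+1$.

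Next I would split into the two cases of the conclusion. If $G$ is $1$-tough, then by Remark \ref{Rem 1-tough equi} the equidimensionality of $R/\cJ_G$ forces $G$ to be the complete graph, giving the first alternative. (Alternatively, one can simply say: if $G$ is $1$-tough and not complete, then $\tau(G)\geq 1$, so $G$ falls outside the range we need to control; but the cleaner route is to invoke Remark \ref{Rem 1-tough equi} which handles this case entirely.) So I may assume $G$ is not $1$-tough, hence $\tau(G)<1$ by the parenthetical remark in the proof of Corollary \ref{Cor tau G}. Now I apply both inequalities of Corollary \ref{Cor tau G} with $\dim R/\cJ_G = n+1$: the lower bound gives $\tau(G)\geq \frac{1}{(n+1)-n+1} = \frac{1}{2}$, and since $G$ is not $1$-tough we also have $\tau(G)<1$. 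This yields $\frac{1}{2}\leq \tau(G)<1$, the second alternative.

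I expect the only genuinely delicate point to be the justification that $\dim R/\cJ_G = n+1$ under equidimensionality — specifically, making sure the argument that $P_\varnothing(G)$ is always a minimal prime of dimension exactly $n+1$ is airtight. This rests on Proposition \ref{MinorMinimal} together with the dimension formula $\dim R/P_S(G) = n - c(S) + |S|$ recalled in Remark \ref{RemDim}, applied with $S=\varnothing$. Everything after that is a direct substitution into Corollary \ref{Cor tau G}, together with a clean appeal to Remark \ref{Rem 1-tough equi} to dispose of the $1$-tough case. So the proof is short: establish $\dim R/\cJ_G = n+1$, dispatch the $1$-tough case via Remark \ref{Rem 1-tough equi}, and in the remaining case read off $\frac12 \le \tau(G) < 1$ from Corollary \ref{Cor tau G}.
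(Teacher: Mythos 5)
Your proposal is correct and follows essentially the same route as the paper: dispose of the $1$-tough case via Remark \ref{Rem 1-tough equi}, use equidimensionality together with Proposition \ref{MinorMinimal} to pin down $\dim(R/\cJ_G)=n+1$, and then read off $\frac{1}{2}\leq\tau(G)$ from Corollary \ref{Cor tau G}. One cosmetic note: the formula you quote, $\dim R/P_S(G)=n-c(S)+|S|$, has the signs swapped (it should be $n+c(S)-|S|$, as in the second display of Remark \ref{RemDim}); your stated value $\dim R/P_\varnothing(G)=n+1$ is nevertheless the correct one.
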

\begin{proof}
We assume that $G$ is not the complete graph.
From Remark \ref{Rem 1-tough equi}, a $1$-tough graph is equidimensional if and only if it is complete. 
Then, $\tau(G)<1.$
If $\cJ_G$ is equidimensional, then $\dim(R/\cJ_G)=n+1$ by Theorem \ref{IntersectionPrimes} and Proposition \ref{MinorMinimal}. Then, $\frac{1}{2}\leq \tau(G)$ by Corollary \ref{Cor tau G}.
\end{proof}

We now start with preparation results needed to show the other inequality of Theorem \ref{Main}: if $R/\cJ_G$ is Cohen-Macaulay, then $ \tau(G)\leq  \frac{1}{2}$.

\begin{lemma}\label{lemma dim vertex-connectivity}
Let $G$ be a connected graph on $[n]$ and $\cJ_G$ the corresponding binomial edge ideal in $R.$
Suppose that $G$ is $\ell$-vertex-connected, $\ell\geq 1$, $G$ is not the complete graph, and $\dim R/\cJ_G=n+1$.
Then, $\dim(R/(P_\varnothing(G)+Q))\leq n-\ell+1$ for every minimal prime, $Q\neq P_\varnothing(G)$, of $\cJ_G$.
\end{lemma}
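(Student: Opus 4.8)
The plan is to describe a minimal prime $Q \neq P_\varnothing(G)$ of $\cJ_G$ via Theorem~\ref{IntersectionPrimes}, so $Q = P_S(G)$ for some set $S$ with $c(S) \geq 2$ (recall from Remark~\ref{RemDim} that if $c(S)=1$ and $S \neq \varnothing$ then $P_S(G)$ is not minimal, and $S=\varnothing$ gives back $P_\varnothing(G)$). First I would write out both prime ideals explicitly: with $G\setminus S$ having connected components $G_1,\dots,G_{c(S)}$ on vertex sets $V_1,\dots,V_{c(S)}$, we have $P_S(G) = \left(\bigcup_{i\in S}\{x_i,y_i\},\ \cJ_{\widetilde G_1},\dots,\cJ_{\widetilde G_{c(S)}}\right)R$, while $P_\varnothing(G) = \cJ_{\cK_n} = I_2(X)$ is generated by all $2\times 2$ minors of the generic matrix $X$. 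The goal is to bound $\dim\bigl(R/(P_\varnothing(G)+P_S(G))\bigr)$ from above by $n-\ell+1$.

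The next step is to analyze $R/(P_\varnothing(G)+P_S(G))$. Since $P_\varnothing(G)+P_S(G) \supseteq (x_i,y_i : i\in S) + I_2(X)$, killing the variables indexed by $S$ reduces us to the generic $2\times(n-|S|)$ matrix $X'$ in the variables indexed by $[n]\setminus S$; the images of the minors $\cJ_{\widetilde G_i}$ are then among the $2\times 2$ minors of $X'$. So $R/(P_\varnothing(G)+P_S(G))$ is a polynomial ring in $2(n-|S|)$ variables modulo $I_2(X')$ together with some of its sub-minor-ideals. Because $I_2(X')$ already forces all columns of $X'$ to be proportional, the quotient $R/(I_2(X')+\text{anything in }I_2(X')) = R/I_2(X')$ has dimension $(n-|S|)+1$ (this is the complete-graph case, $\dim R/\cJ_{\cK_m} = m+1$ from Remark~\ref{RemDim} with $S=\varnothing$). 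Hence $\dim\bigl(R/(P_\varnothing(G)+P_S(G))\bigr) \leq (n-|S|)+1$. It therefore suffices to show $|S|\geq \ell$.

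This last inequality is exactly where $\ell$-vertex-connectivity and the hypothesis $\dim R/\cJ_G = n+1$ enter, and it is the main obstacle. If $|S| < \ell$, then by definition of $\ell$-vertex-connectivity $G\setminus S$ is connected, i.e. $c(S) = 1$, contradicting $c(S)\geq 2$ (or, in the edge case, forcing $S=\varnothing$ which we excluded). One subtlety to handle carefully: we must rule out that $S = [n]$ or that $Q$ could be $P_S(G)$ with $|S|=n$; but $G$ is not complete and $\dim R/\cJ_G = n+1$ combined with Proposition~\ref{MinorMinimal} pins down $P_\varnothing(G)$ as the unique top-dimensional minimal prime, and any competing minimal prime $P_S(G)$ has $c(S)\geq 2$ with $S\subsetneq [n]$, so $|S|\leq n-2$ automatically. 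Thus $|S|\geq \ell$ always holds for minimal primes $Q\neq P_\varnothing(G)$, giving $\dim\bigl(R/(P_\varnothing(G)+Q)\bigr)\leq (n-|S|)+1 \leq n-\ell+1$, as desired.
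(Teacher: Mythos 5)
Your proposal is correct and follows essentially the same route as the paper: identify any minimal prime $Q\neq P_\varnothing(G)$ as $P_S(G)$ with $c(S)\geq 2$, use $\ell$-vertex-connectivity to force $|S|\geq \ell$, and observe that $P_\varnothing(G)+P_S(G)$ is the determinantal ideal of the generic $2\times(n-|S|)$ matrix plus the variables indexed by $S$, giving dimension $n-|S|+1\leq n-\ell+1$. The only cosmetic difference is that the paper splits off the case $\ell=1$ separately, while your argument treats all $\ell\geq 1$ uniformly.
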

\begin{proof}
We note that $\cJ_G$ is not a prime ideal because $G$ is not the complete graph.
We consider two cases. 
Suppose that $\ell=1.$ Since $G$ is connected, we have  $\dim R/P_\varnothing (G)=n+1.$ Then, for any minimal prime $Q\neq P_\varnothing(G),$ we have $\dim R/P_\varnothing (G)\geq n.$

We now assume that $\ell\geq 2.$
Let $S\subseteq [n]$ with {\cb $0<|S|< \ell.$}
Since $G$ is $\ell$-vertex-connected, $P_S(G)$ contains $P_{\varnothing}(G)$ (see \cite[Corollary 3.9]{BinomialEdgeIdeal}).
Then, $P_S(G)$ cannot be a minimal prime if $|S|<\ell.$

We now assume that $Q=P_S(G)$ is a minimal prime, and so,  $|S|\geq \ell.$
Then, $P_{\varnothing}(G)+Q$ is generated by
$$
\{ x_iy_j-x_j y_i: i,j\in [n]\setminus S\}\cup\{x_i,y_i:i\in S \}.
$$ 
Then, $\dim R/(P_{\varnothing}(G)+Q)=n-|S|+1\leq n-\ell+1.$
\end{proof}

We now focus on $2$-vertex-connected graphs. We first need to recall the definition and a few properties of local cohomology. We refer to \cite{BroSharp} for an introduction to this cohomological theory.

\begin{definition}\label{Def LC}
Let $I\subseteq R$ be a  ideal  generated by polynomials $\underline{f}=f_1,\ldots,f_\ell\in R$, and $M$ an $R$-module. The \emph{$\check{\mbox{C}}$ech complex of $M$ with respect to the sequence $\underline{f}$, $\Cech^\bullet(\underline{f};M)$,} is defined by
{\cb
$$
0\to M\to \bigoplus_i M_{f_i}\to\bigoplus_{i,j} M_{f_i f_j}\to \ldots \to M_{f_1 \cdots f_\ell} \to 0,
$$
}
where $\Cech^i(\underline{f};M)=\bigoplus \limits_{1 \leq j_1<\ldots<j_i\leq \ell} M_{f_{j_1}\cdots f_{j_i}}$ and the morphism 
in every summand is a localization map up to sign.
\emph{The local cohomology of $M$ with support on $I$} is defined by
$$
H^i_I(M)=H^i(\Cech^\bullet(\underline{f};M)).
$$
\end{definition}

\begin{definition}
Let $I\subseteq R$ be an ideal. We define \emph{the cohomological dimension of $I$} by
$$
\cd(I)=\max\{i\in\NN: H^i_I(R)\neq 0\}.
$$
\end{definition}

Before we are finally ready to prove  Theorem \ref{Main}, we need to recall the definition of Serre's conditions.

\begin{definition}
Let $T$ be a Noetherian ring. We say that $T$ is an $S_\ell$ ring if 
$$
\Depth(T_\fp)=\min\{\ell, \dim T_\fp \}
$$
for every prime ideal $\fp.$ In particular, $T$ is Cohen-Macaulay if and only if it is an $S_{\dim(T)}$ ring.
\end{definition}

The following result is our final ingredient in the proof of Theorem \ref{Main}. 

\begin{proposition}\label{Prop 2-vertex-conec}
Let $G$ be a connected graph on $[n]$ and $\cJ_G$ the corresponding binomial edge ideal in $R.$
Suppose that $G$ is $2$-vertex-connected,  and that $G$ is not the complete graph.
Then, $R/\cJ_G$ is not an $S_2$ ring. In particular, if $R/\cJ_G$ is Cohen-Macaulay, then $G$ is not a $2$-vertex-connected graph.
\end{proposition}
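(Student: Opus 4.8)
The strategy is to use the Second Vanishing Theorem / cohomological dimension together with the structure of the minimal primes of $\cJ_G$. Since $R/\cJ_G$ being $S_2$ is a condition that can be detected by local cohomology of $R$ in high cohomological degrees (via the Hartshorne--Lichtenbaum and second vanishing phenomena, or directly via a Mayer--Vietoris argument on the primary decomposition), I would first reduce the question to a statement about $\cd(\cJ_G)$, and then bound this cohomological dimension from below using the fact that $G$ is $2$-vertex-connected. Concretely, write $\cJ_G = P_\varnothing(G) \cap \cJ'$, where $\cJ' = \bigcap_{Q \neq P_\varnothing(G)} Q$ is the intersection of the remaining minimal primes (these exist since $G$ is not complete, so $\cJ_G$ is not prime, and by Proposition \ref{MinorMinimal} $P_\varnothing(G)$ is one of them). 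The key point is that $2$-vertex-connectedness forces every other minimal prime $Q = P_S(G)$ to have $|S| \geq 2$, so by the computation in Lemma \ref{lemma dim vertex-connectivity} (with $\ell = 2$) we get $\dim R/(P_\varnothing(G) + \cJ') \leq \dim R/(P_\varnothing(G)+Q) \leq n - 1$ for each such $Q$, hence $\dim R/(P_\varnothing(G)+\cJ') \leq n-1$.

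\textbf{Main steps.} First, I would invoke a Mayer--Vietoris sequence for local cohomology with support in $\cJ_G$: from $\cJ_G = P_\varnothing(G) \cap \cJ'$ one gets the long exact sequence
$$
\cdots \to H^i_{P_\varnothing(G) + \cJ'}(R) \to H^i_{P_\varnothing(G)}(R) \oplus H^i_{\cJ'}(R) \to H^i_{\cJ_G}(R) \to H^{i+1}_{P_\varnothing(G)+\cJ'}(R) \to \cdots
$$
Now $P_\varnothing(G) = I_2(X)$ is the ideal of maximal minors of the generic $2\times n$ matrix, so $R/P_\varnothing(G)$ is Cohen--Macaulay of dimension $n+1$, and $\cd(P_\varnothing(G)) = \operatorname{ht} P_\varnothing(G) = n-1$ (it is a perfect ideal, so $H^i_{P_\varnothing(G)}(R) = 0$ for $i \neq n-1$). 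Similarly $\operatorname{ht} \cJ' \geq n-1$ and, since $P_\varnothing(G)+\cJ'$ has dimension $\leq n-1$, i.e. height $\geq n+1$, the term $H^i_{P_\varnothing(G)+\cJ'}(R)$ vanishes for $i < n+1$ by the basic dimension bound on cohomological dimension, but more importantly one should control it from \emph{above}: $\cd(P_\varnothing(G)+\cJ') \leq 2n - \dim R/(P_\varnothing(G)+\cJ')$ is too crude; instead I would use that $H^i_{\fa}(R) = 0$ for $i > \dim R - \dim(R/\fa)$ when... actually the cleaner route is via the Lichtenbaum--Hartshorne / punctured-spectrum criterion for $S_2$. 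The cleanest formulation: $R/\cJ_G$ is $S_2$ if and only if the natural map $R/\cJ_G \to \bigcap_{Q} (R/\cJ_G)_Q$ (the $S_2$-ification) is an isomorphism, which fails precisely when $\operatorname{Spec}(R/\cJ_G)$ is "disconnected in codimension one" along the locus where $P_\varnothing(G)$ meets the other components. Since $\dim R/(P_\varnothing(G) + \cJ') \leq n-1 = \dim R/\cJ_G - 2$, the two closed subsets $V(P_\varnothing(G))$ and $V(\cJ')$ of $\operatorname{Spec}(R/\cJ_G)$ meet only in codimension $\geq 2$, which violates Hartshorne's connectedness criterion for $S_2$ rings (an $S_2$ ring is connected in codimension $1$). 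This gives the contradiction.

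\textbf{Carrying it out.} So the argument reduces to: (1) $G$ not complete $\Rightarrow$ $\cJ_G$ has at least two minimal primes, one being $P_\varnothing(G)$; (2) $G$ is $2$-vertex-connected $\Rightarrow$ every other minimal prime is of the form $P_S(G)$ with $|S| \geq 2$ (this uses \cite[Corollary 3.9]{BinomialEdgeIdeal} exactly as in Lemma \ref{lemma dim vertex-connectivity}); (3) hence for the decomposition $\cJ_G = P_\varnothing(G) \cap \cJ'$ we have $\dim R/(P_\varnothing(G) + \cJ') \leq n - 1$; (4) but $\dim R/P_\varnothing(G) = n+1$ and $\dim R/\cJ' \leq n+1$ as well (each $P_S(G)$ with $|S|=2, c(S)=2$ could give dimension $n+1$), so $R/\cJ_G$ has two "branches" of dimension $n+1$ meeting in dimension $\leq n-1$; (5) apply Hartshorne's theorem that a ring satisfying $S_2$ (equivalently, whose local rings are all $S_2$) is connected in codimension one — equivalently, the punctured spectrum of each localization $(R/\cJ_G)_\fp$ with $\operatorname{depth} \geq 2$ must be connected — to derive that such a decomposition is impossible. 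The last sentence of the proposition is then immediate: Cohen--Macaulay $\Rightarrow$ $S_2$.

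\textbf{Expected main obstacle.} The subtle point is step (5): making the "connected in codimension one" obstruction precise. The cleanest tool is the following: if a Noetherian local ring $(A,\m)$ is $S_2$ (in particular catenary, equidimensional considerations aside) and $\dim A \geq 2$, then $\operatorname{Spec}(A) \setminus \{\m\}$ is connected; more generally, an $S_2$ ring is connected in codimension $1$. One must localize at a prime $\fp$ lying on $V(P_\varnothing(G)+\cJ')$ chosen so that $\dim (R/\cJ_G)_\fp = 2$ — such $\fp$ exists because $\dim R/(P_\varnothing(G)+\cJ') = \dim R/\cJ_G - 2$ precisely when the minimum "meeting codimension" is exactly $2$, and if it is larger the argument is even easier — and then observe that the punctured spectrum of $(R/\cJ_G)_\fp$ has (at least) two connected components coming from $P_\varnothing(G)$ and from $\cJ'$, whereas $S_2$ would force it connected (equivalently $\operatorname{depth}(R/\cJ_G)_\fp \leq 1$, contradicting $S_2$ since $\dim = 2$). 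Getting the dimension bookkeeping exactly right — and handling the possibility that $\dim R/(P_\varnothing(G)+\cJ')$ is strictly less than $n-1$, which only helps — is the part that needs care; the rest is formal. Alternatively one can phrase the whole thing cohomologically via $H^{n-1}_{\cJ_G}(R)$ and the Mayer--Vietoris sequence above, showing this module is nonzero when the intersection drops in dimension, which contradicts an upper bound on $\cd(\cJ_G)$ forced by $S_2$; I would present whichever is shorter in the write-up, likely the Hartshorne connectedness version.
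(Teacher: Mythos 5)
Your final argument is correct, and its skeleton is identical to the paper's: the same splitting $\cJ_G = P_\varnothing(G)\cap \cJ'$ into $P_\varnothing(G)$ and the intersection of the remaining minimal primes, the same use of $2$-vertex-connectivity (via \cite[Corollary 3.9]{BinomialEdgeIdeal}, exactly as in Lemma \ref{lemma dim vertex-connectivity}) to force $|S|\geq 2$ for every other minimal prime $P_S(G)$, and the same dimension count $\dim R/(P_\varnothing(G)+\cJ')\leq n-1$. Where you diverge is the endgame. You conclude by Hartshorne's connectedness theorem: an $S_2$ quotient of a Cohen--Macaulay ring is equidimensional and connected in codimension one, which is violated because the two branches $V(P_\varnothing(G))$ and $V(\cJ')$, each of dimension $n+1$, meet in dimension at most $n-1$. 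The paper instead takes the short exact sequence $0\to R/\cJ_G\to R/P_\varnothing(G)\oplus R/\cJ'\to R/(P_\varnothing(G)+\cJ')\to 0$, applies the long exact sequence for $H^\bullet_\m$, kills the connecting terms by Grothendieck's vanishing theorem since $\dim R/(P_\varnothing(G)+\cJ')\leq n-1$, deduces that $H^{n+1}_\m(R/\cJ_G)$ splits into two nonzero summands, and cites the criterion that the top local cohomology of an equidimensional $S_2$ graded algebra is indecomposable \cite[Corollary 3.7]{HHgraph}. The two conclusions are equivalent in spirit (both are expressions of connectedness in codimension one \`a la Hochster--Huneke); the paper's version outsources the delicate part to a single citation, whereas yours requires the localization bookkeeping you flag as the ``main obstacle,'' which is essentially a re-proof of Hartshorne's theorem. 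Either write-up is acceptable. One caution about a branch you discarded mid-argument: the assertion that $H^i_{P_\varnothing(G)}(R)=0$ for $i\neq n-1$ because $I_2(X)$ is perfect is valid in positive characteristic (Peskine--Szpiro) but false in characteristic zero --- for $n\geq 3$ the cone over the Segre embedding of $\mathbb{P}^1\times\mathbb{P}^{n-1}$ is Hartshorne's classical example with $\cd(I_2(X))>\Ht(I_2(X))$ --- so it is good that your final route does not depend on it.
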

\begin{proof}
We proceed by contradiction and assume that $R/\cJ_G$ is an $S_2$ ring.
Since $R/\cJ_G$ is the quotient of a polynomial ring over a homogeneous ideal, we have $R/\cJ_G$ is an equidimensional ring of dimension $n+1$.
Since $G$ is not the complete graph, $\cJ_G$ is not a prime ideal.
Let $P_{\varnothing}(G),Q_1,\ldots, Q_t,$ be the minimal primes of $\cJ_G$. In particular, $\cJ_G=P_{\varnothing}(G)\cap Q_1\cap\ldots\cap Q_t$ because the binomial edge ideals are radical.
Let $I=Q_1 \cap\ldots\cap Q_t$.
For every prime ideal $\fp$ such that $P_{\varnothing}(G)+I\subseteq \fp,$
we have $Q_i\subseteq \fp$ for some $i$. Then, $P_{\varnothing}(G)+Q_i\subseteq \fp,$ and $\dim(R/P_{\varnothing}(G)+I)\leq \dim(R/(P_{\varnothing}(G)+Q_i)\leq n-1$ by Lemma \ref{lemma dim vertex-connectivity}.

From the short exact sequence
$$
0\to R/\cJ_G\to R/P_{\varnothing}(G)\oplus R/I\to R/(P_{\varnothing}(G)+I)\to 0,
$$
we obtain an associated long exact sequence 
$$
\ldots\to H^{n+1}_{\m} (R/\cJ_G)   \to
 H^{n+1}_{\m}(R/P_{\varnothing}(G))\oplus H^{n+1}_{\m}(R/I)\to  H^{n+1}_{\m}(R/(P_{\varnothing}(G)+I))\to 0.
$$

Since $$\dim(R/P_{\varnothing}(G)+I)\leq \dim(R/(P_{\varnothing}(G)+Q_i)\leq n-1,$$ we deduce that $ H^{n}_{\m}(R/(P_{\varnothing}(G)+I)= H^{n+1}_{\m}(R/(P_{\varnothing}(G)+I)=0$ by Grothendieck's Vanishing Theorem.
Then, $H^{n+1}_{\m} (R/\cJ_G)\cong H^{n+1}_{\m}(R/P_{\varnothing}(G))\oplus H^{n+1}_{\m}(R/I)$. Since $R/\cJ_G$ is equidimensional, $H^{n+1}_{\m}(R/I)\neq 0$ by Grothendieck's Non-vanishing Theorem. Then, $H^{n+1}_{\m} (R/\cJ_G)$ decomposes, and so, $R/\cJ_G$ cannot be $S_2$ \cite[Corollary 3.7]{HHgraph}.
\end{proof}

We now prove a more  general version of Theorem \ref{Main}, which only assumes that $R/\cJ_G$ is $S_2$.

\begin{theorem}\label{Main Tech}
Let $G$ be a connected graph on $[n]$ and $\cJ_G$ the corresponding binomial edge ideal in $R.$
If $R/\cJ_G$ is  $S_2$, then either $G$ is the complete graph or $\tau(G)=\frac{1}{2}$. In particular, this holds when $R/\cJ_G)$ is Cohen-Macaulay.
\end{theorem}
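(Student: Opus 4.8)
The plan is to combine the two inequalities that have been prepared in this subsection. If $R/\cJ_G$ is $S_2$, then in particular it is equidimensional (being a homogeneous quotient of a polynomial ring, it has no embedded primes, and an $S_2$ ring cannot have minimal primes of differing dimension). So I would first invoke Corollary \ref{Tough Equi}: either $G$ is the complete graph, in which case we are done, or $\frac{1}{2}\leq\tau(G)<1$. For the rest of the argument assume $G$ is not complete, so $\dim R/\cJ_G=n+1$ and $\frac{1}{2}\leq\tau(G)<1$.

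It remains to show the reverse inequality $\tau(G)\leq\frac{1}{2}$. By Remark \ref{Toughness Frac}, there is a set $\wS\subsetneq[n]$ with $c(\wS)\geq 2$ realizing the minimum, $\tau(G)=\frac{|\wS|}{c(\wS)}$. The point is to bound $|\wS|$ using the vertex-connectivity. Since a minimal vertex cut of $G$ has size $\kappa(G)$ and disconnects $G$, we have $\tau(G)\leq\frac{\kappa(G)}{c(S_0)}\leq\frac{\kappa(G)}{2}$ for a minimum cut $S_0$; so if I can show $\kappa(G)\leq 1$, i.e.\ that $G$ is not $2$-vertex-connected, I get $\tau(G)\leq\frac{1}{2}$, which together with the first paragraph forces $\tau(G)=\frac{1}{2}$. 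And $\kappa(G)\leq 1$ is exactly the conclusion of Proposition \ref{Prop 2-vertex-conec}: since $R/\cJ_G$ is $S_2$ and $G$ is not complete, $G$ cannot be $2$-vertex-connected.

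So the skeleton is: (1) $S_2$ $\Rightarrow$ equidimensional $\Rightarrow$ $\dim R/\cJ_G = n+1$ and, by Corollary \ref{Tough Equi}, $\tau(G)\geq\frac{1}{2}$ (or $G=\cK_n$, done); (2) $S_2$ and $G\neq\cK_n$ $\Rightarrow$ $G$ is not $2$-vertex-connected, by Proposition \ref{Prop 2-vertex-conec}, i.e.\ $\kappa(G)\leq 1$; (3) a minimum vertex cut $S_0$ has $|S_0|=\kappa(G)\leq 1$ and $c(S_0)\geq 2$, whence $\tau(G)\leq\frac{|S_0|}{c(S_0)}\leq\frac{1}{2}$; (4) combine (1) and (3). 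The final clause about the Cohen-Macaulay case is immediate since Cohen-Macaulay implies $S_2$.

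The main obstacle — really the only non-bookkeeping point — is step (3): one must be a little careful that $G$ not being $2$-vertex-connected actually yields a \emph{cut set} $S_0$ with $|S_0|\leq 1$ and $c(S_0)\geq 2$ (as opposed to $G$ being disconnected with $S_0=\varnothing$, which is excluded by hypothesis, or $|S_0|=0$), so that Remark \ref{Toughness Frac} applies and gives $\tau(G)\leq\frac{1}{2}$ rather than just $\tau(G)<1$. Since $G$ is connected but not $2$-vertex-connected, there is a single vertex whose removal disconnects $G$ (a cut vertex), so $|S_0|=1$ and $c(S_0)\geq 2$, and the bound $\tau(G)\le\frac12$ follows.
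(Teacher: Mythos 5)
Your proposal is correct and follows essentially the same route as the paper's proof: Corollary \ref{Tough Equi} for the lower bound $\frac{1}{2}\leq\tau(G)$, Proposition \ref{Prop 2-vertex-conec} to rule out $2$-vertex-connectivity, and then a cut vertex giving a set $S_0$ with $|S_0|=1$ and $c(S_0)\geq 2$, hence $\tau(G)\leq\frac{1}{2}$. Your extra care in step (3) -- checking that failure of $2$-vertex-connectivity for a connected, non-complete graph really produces a one-element cut set rather than the empty set -- is a point the paper passes over quickly, but the argument is the same.
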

\begin{proof}
We suppose that $G$ is not the complete graph.
By Corollary \ref{Tough Equi},  $\frac{1}{2}\leq\tau(G).$
By Proposition \ref{Prop 2-vertex-conec}, $G$ cannot  be $2$-vertex-connected
because $\dim(R/\cJ_G)=n+1$ and $R/\cJ_G$ is $S_2$.
There exists a subset $S\subseteq [n]$ with one element such that $c(S)\geq 2.$
Then, $$\tau(G)=\min\left\{\frac{|S|}{c(S)}:  c(S)\geq 2\right\}\leq \frac{1}{2}.$$
The last claim follows from the fact that a standard graded Cohen-Macaulay $K$-algebra is equidimensional and $S_2.$
\end{proof}

\begin{remark}
We point out that the converse of the previous theorem is not true. For instance, if $t>2$, the h $\cJ_{\cK_{t,2t}}$ is not even equidimensional \cite[Lemma 3.2]{SZBipartite}, but its toughness is $\frac{1}{2}.$
\end{remark}

{\cb
We give an example that was kindly suggested by the referee.
\begin{example}
The  toughness of the following graph is $\frac{1}{2}$, and its vertex-connectivity is $1$. However, $R/\cJ_G$ is not Cohen-Macaulay, because its  depth is $6$ and its dimension is $7$. This computation was verified using Macaulay2 \cite{M2}.
\begin{center}
\includegraphics[scale=.25]{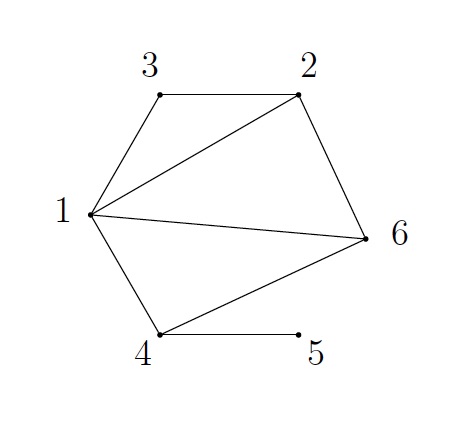}
\end{center}
\end{example}
}
\begin{example}
Suppose that $G$ is chordal graph on $[n]$ satisfying:
\begin{enumerate}
\item any two distinct maximal cliques intersect in at most one vertex;
\item each vertex of G is the intersection of at most two maximal cliques.
\end{enumerate}
 Then, $R/\cJ_G$ is Cohen-Macaulay \cite[Theorem 1.1]{ChordalBEI}. Hence, $\tau(G)=\frac{1}{2}$ by Theorem \ref{Main Tech}.
\end{example}

\begin{corollary}
Let $G$ be a graph  $[n]$ and $\cJ_G$ its corresponding binomial edge ideal in $R$. Let  $G_1,\ldots, G_c$ denote the connected components of $G$. If $R/\cJ_G$ is Cohen-Macaulay, then either $G_i$ is the complete graph or $\tau(G_i)=\frac{1}{2}$ for every $i=1,\ldots, n$.
\end{corollary}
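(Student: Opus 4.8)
The plan is to reduce the statement about disconnected graphs to the connected case handled by Theorem~\ref{Main Tech}. First I would recall the standard fact (from \cite{BinomialEdgeIdeal}) that the binomial edge ideal of $G$ is, up to renaming variables, the ``sum'' of the binomial edge ideals of its connected components placed in disjoint sets of variables; concretely, if $G_1,\dots,G_c$ are the components of $G$ on vertex sets $V_1,\dots,V_c$, then $R/\cJ_G \cong R_1/\cJ_{G_1} \otimes_K \cdots \otimes_K R_c/\cJ_{G_c}$, where $R_i = K[x_j,y_j : j\in V_i]$. This tensor decomposition is exactly the algebraic reflection of the disjoint union of graphs.

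Next I would invoke the standard homological fact that a tensor product (over a field) of finitely generated graded $K$-algebras is Cohen-Macaulay if and only if each factor is Cohen-Macaulay. So if $R/\cJ_G$ is Cohen-Macaulay, each $R_i/\cJ_{G_i}$ is Cohen-Macaulay. Since each $G_i$ is connected, Theorem~\ref{Main Tech} applies to $R_i/\cJ_{G_i}$ directly (with $[n]$ replaced by $V_i$): it says that either $G_i$ is a complete graph or $\tau(G_i)=\frac12$. That yields the conclusion. I should also note the small edge case: if $G_i$ is a single vertex, then $\cJ_{G_i}=0$ and $G_i$ is trivially the complete graph $\cK_1$, so it falls under the first alternative and there is nothing to check. (Also one should read the index range $i=1,\dots,n$ in the statement as $i=1,\dots,c$.)

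The main obstacle is making sure the two background facts I am leaning on are stated with a citation rather than proved here: the tensor-product description of $\cJ_G$ for disconnected $G$, and the Cohen-Macaulayness of a tensor product of $K$-algebras. The first is essentially immediate from the definition of $\cJ_G$ together with Theorem~\ref{IntersectionPrimes}, or can simply be cited from \cite{BinomialEdgeIdeal}; the second is classical (one can deduce it from the behaviour of depth and dimension under tensor products over a field, e.g.\ via the fact that $\Depth$ and $\dim$ are additive over $\otimes_K$, which forces the Cohen-Macaulay defect to be additive as well). Once these are in hand the proof is a two-line assembly, so I would keep it short.

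\begin{proof}
Since $R/\cJ_G$ is Cohen-Macaulay, we may read the index $i$ as ranging over $1,\dots,c$. Write $V_i\subseteq[n]$ for the vertex set of $G_i$ and $R_i=K[x_j,y_j:j\in V_i]$. By the definition of a binomial edge ideal, the generators of $\cJ_G$ coming from edges of $G_i$ involve only variables in $R_i$, and there are no edges between distinct components; hence
$$
R/\cJ_G\;\cong\; (R_1/\cJ_{G_1})\otimes_K\cdots\otimes_K(R_c/\cJ_{G_c}).
$$
A tensor product over the field $K$ of finitely generated graded $K$-algebras is Cohen-Macaulay if and only if each tensor factor is Cohen-Macaulay (depth and Krull dimension are additive over $\otimes_K$, so the Cohen-Macaulay defect is too). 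Therefore each $R_i/\cJ_{G_i}$ is Cohen-Macaulay. If $G_i$ is a single vertex, then $G_i=\cK_1$ is complete and we are done for that $i$. Otherwise $G_i$ is a connected graph on at least two vertices, and Theorem~\ref{Main Tech} (applied with $[n]$ replaced by $V_i$) shows that either $G_i$ is the complete graph or $\tau(G_i)=\frac12$.
\end{proof}
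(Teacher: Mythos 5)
Your proposal is correct and follows essentially the same route as the paper: decompose $R/\cJ_G$ as the tensor product over $K$ of the $R_i/\cJ_{G_i}$, use that Cohen-Macaulayness of the tensor product is equivalent to that of each factor, and then apply Theorem~\ref{Main Tech} to each connected component. Your extra remarks on the single-vertex case and the index typo are fine but not needed beyond what the paper's own two-line argument already covers.
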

\begin{proof}
Let  $R_i=K[x_j,y_j]_{\{i,j\}\in G}.$ 
Then, $R/\cJ_G=R_1/\cJ_{G_1}\otimes_K \ldots\otimes_K R_c/\cJ_{G_c}$. Hence, $R/\cJ_G$ is Cohen-Macaulay if and only each 
$R_i/\cJ_{G_i}$ is Cohen-Macaulay. Then, the result follows from Theorem \ref{Main Tech}.
\end{proof}

Using the previous corollary we recover a known  characterization of complete intersections for binomial edge ideals.

\begin{corollary}[{\cite[Corollary 1.2]{ChordalBEI} \& \cite[Theorem 2.2]{Rinaldo}}]
Let $G$ be a graph on $[n]$  and $\cJ_G$ its corresponding binomial edge ideal in $R$. Let $G_1,\ldots, G_c$ denote the connected components of $G$.
Then, $R/\cJ_G$ is a complete intersection if and only if $G_i$ is a path for every $i=1,\ldots, c.$
\end{corollary}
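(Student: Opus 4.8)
The plan is to prove the complete intersection characterization by combining the preceding corollary with the fact that paths are the complete graphs among graphs whose blocks must be trivially tough. First I would reduce to the connected case: since $R/\cJ_G = R_1/\cJ_{G_1} \otimes_K \cdots \otimes_K R_c/\cJ_{G_c}$ is a tensor product of $K$-algebras on disjoint sets of variables, $R/\cJ_G$ is a complete intersection if and only if each $R_i/\cJ_{G_i}$ is a complete intersection. So it suffices to show that for a connected graph $G$ on $[n]$, $R/\cJ_G$ is a complete intersection if and only if $G$ is a path.

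For the ``if'' direction, suppose $G = P_n$ is the $n$-path with edges $\{i,i+1\}$. Then $\cJ_G$ is generated by the $n-1$ binomials $x_i y_{i+1} - x_{i+1} y_i$ for $i = 1, \ldots, n-1$ in the ring $R = K[x_1,\ldots,x_n,y_1,\ldots,y_n]$ of dimension $2n$. One checks $\dim R/\cJ_G = n+1$ (for instance via Remark \ref{RemDim}, since a path is $1$-tough and Proposition \ref{equiv 1-tough} gives $\dim = n+1$, or directly since $P_n$ is chordal with the two clique conditions so Cohen-Macaulay of the expected dimension). Hence $\cJ_G$ has height $2n - (n+1) = n-1$, equal to the number of generators, so $\cJ_G$ is a complete intersection.

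For the ``only if'' direction, suppose $R/\cJ_G$ is a complete intersection. Then in particular $R/\cJ_G$ is Cohen-Macaulay, so by the previous corollary $G$ is either complete or $\tau(G) = \frac{1}{2}$. A complete intersection is Cohen-Macaulay and hence equidimensional of dimension $2n - \mu(\cJ_G)$ where $\mu(\cJ_G) = |E(G)|$ is the number of edges (the binomial generators are a minimal generating set). On the other hand, for a connected graph $\dim R/\cJ_G \geq \dim R/P_\varnothing(G) = n+1$ by Proposition \ref{MinorMinimal}, so $2n - |E(G)| \geq n+1$, i.e.\ $|E(G)| \leq n-1$; being connected on $n$ vertices forces $|E(G)| = n-1$ and $\dim R/\cJ_G = n+1$, so $G$ is a tree. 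It remains to rule out trees with a vertex of degree $\geq 3$: if $v$ has three neighbors in a tree, then removing $v$ disconnects $G$ into at least three components, so $S = \{v\}$ has $|S| = 1$ and $c(S) \geq 3$, giving $\tau(G) \leq \frac{|S|}{c(S)} \leq \frac{1}{3} < \frac{1}{2}$, and $G$ is not complete since $n \geq 3$ for a tree with a degree-$3$ vertex, contradicting the corollary. Hence every vertex of the tree has degree $\leq 2$, so $G$ is a path.

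The main obstacle here is mostly bookkeeping rather than a deep idea: one must be careful that $\mu(\cJ_G)$ really equals $|E(G)|$ (the binomials $x_iy_j - x_jy_i$ over distinct edges are linearly independent over $K$ after passing to $R/\mathfrak{m}^2$-type arguments, or simply because they involve disjoint ``leading'' monomials in a suitable term order), and that $\dim R/\cJ_G = n+1$ exactly when $G$ is a tree. An alternative to the degree argument in the last step is to invoke that among trees, paths are precisely those that are chordal with the two maximal-clique conditions of the cited theorem, but the toughness argument is cleaner and self-contained given what we have already proved.
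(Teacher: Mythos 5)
Your argument is correct and follows essentially the same route as the paper: reduce to the connected case, use that a complete intersection forces $\mu(\cJ_G)=\Ht(\cJ_G)=n-1$ so that $G$ is a tree, and then rule out vertices of degree $\geq 3$ via the toughness bound $\tau(G)\leq\frac{1}{3}$ against Theorem~\ref{Main Tech}. One small correction: in your justification of the ``if'' direction, a path on $n\geq 3$ vertices is \emph{not} $1$-tough (the paper records $\tau(P_n)=\frac{1}{2}$), so Proposition~\ref{equiv 1-tough} does not apply; your alternative justification via Cohen--Macaulayness (or the direct observation that $c(S)-|S|\leq 1$ for every $S$ in a path, whence $\dim R/\cJ_{P_n}=n+1$ by Remark~\ref{RemDim}) is the one to keep.
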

\begin{proof}
We note that $\{x_iy_j-x_jy_i:\{i,j\}\in G\}$ is a regular sequence if and only if 
$\{x_iy_j-x_jx_i:\{i,j\}\in G_i\}$ is a regular sequence for every $i=1,\ldots, c.$
Then, it suffices to show the statement assuming that $G$ is connected.
In this case, we have $\Ht(\cJ_G)=n-1.$ Then, $\cJ_G$ must be generated by $n-1$ minors. 
This means that $G$ has $n-1$ edges. Thus, $G$ must be a tree. We note that $G$ cannot have a vertex with degree greater or equal than $3$; otherwise, $\tau(G)\leq \frac{1}{3}$, which contradicts Theorem \ref{Main Tech}.
Then,  $G$ contains only vertices of degree $1$ and $2$. Hence, $G$ is a path.
\end{proof}

\subsection{Vertex-connectivity and binomial edge ideals}

We now focus on Theorem \ref{Main Depth}. We recall a property of regular  ring positive characteristic that relates cohomological dimension and depth.

\begin{proposition}[{see remark after \cite[Proposition 4.1]{P-S}}]\label{Prop PS}
If $A$ is a polynomial ring over a field positive characteristic $p,$ then $\cd(I)\leq \pd_A(A/I).$
\end{proposition}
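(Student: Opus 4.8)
The statement is: if $A$ is a polynomial ring over a field of positive characteristic $p$ and $I\subseteq A$ an ideal, then $\cd(I)\le \pd_A(A/I)$. The plan is to reduce everything to the local situation and then run the standard Peskine--Szpiro / Frobenius argument. First I would pass to the local ring $A_\m$ at a maximal ideal (or complete to $\widehat{A}$); since $\cd$ can be detected after such a flat base change, and projective dimension is preserved when localizing at a maximal ideal containing $I$, it suffices to handle a regular local ring $R$ of characteristic $p$ with a finite free resolution of $R/I$. Set $d=\pd_R(R/I)$ and suppose toward a contradiction that $H^{d+1}_I(R)\ne 0$.

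The core input is the behavior of local cohomology under the Frobenius functor $F$. Because $R$ is regular, $F$ is exact (Kunz), so applying $F^e$ to a minimal free resolution $P_\bullet$ of $R/I$ gives a resolution $F^e(P_\bullet)$ of $F^e(R/I)=R/I^{[p^e]}$ of the same length $d$; hence $\pd_R(R/I^{[p^e]})\le d$ for all $e$, and in particular $\Ext^j_R(R/I^{[p^e]},R)=0$ for all $j>d$ and all $e$. Next I would invoke the standard identification of local cohomology as a direct limit of Ext modules along the Frobenius powers: since the ideals $I^{[p^e]}$ are cofinal with the ordinary powers of $I$, one has
$$
H^{i}_I(R)\;\cong\;\varinjlim_e \Ext^{i}_R\bigl(R/I^{[p^e]},R\bigr),
$$
where the transition maps are the natural ones induced by $I^{[p^{e+1}]}\subseteq I^{[p^e]}$ together with the Frobenius action. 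Taking $i=d+1$, every term in the direct limit vanishes by the projective-dimension bound above, so $H^{d+1}_I(R)=0$, contradicting our assumption. Therefore $\cd(I)\le d=\pd_A(A/I)$.

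The main obstacle — and the step I would spend the most care on — is justifying the comparison $H^i_I(R)\cong\varinjlim_e \Ext^i_R(R/I^{[p^e]},R)$ with the correct transition maps, i.e. making sure the Frobenius-power ideals are genuinely cofinal with $\{I^n\}$ (true since $I^{[p^e]}\supseteq I^{p^e(\text{number of generators})}$ and $I^{[p^e]}\subseteq I^{p^e}$... more precisely $I^{[p^e]}\subseteq I^{p^e}$ and $I^{(\mu)\cdot p^e}\subseteq I^{[p^e]}$ where $\mu$ is the number of generators, so the two filtrations are mutually cofinal) and that the exactness of Frobenius on the regular ring $R$ makes $F^e$ of the resolution a resolution of bounded length. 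Once cofinality and Kunz's theorem are in hand, the rest is formal. This is exactly the argument in the remark following \cite[Proposition 4.1]{P-S}, so I would cite it and present only the outline above.
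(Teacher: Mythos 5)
Your argument is correct and is precisely the Frobenius/Peskine--Szpiro argument that the paper itself invokes by citation (the remark after \cite[Proposition 4.1]{P-S}): the paper gives no independent proof, so your outline --- exactness of the Frobenius functor on the regular ring via Kunz, the resulting bound $\pd_A(A/I^{[p^e]})\leq \pd_A(A/I)$, cofinality of $\{I^{[p^e]}\}$ with $\{I^n\}$, and the identification $H^i_I(A)\cong\varinjlim_e \Ext^i_A(A/I^{[p^e]},A)$ --- is the intended proof. The only cosmetic remark is that the reduction to the local or complete case is unnecessary, since Kunz's theorem and the direct-limit description of local cohomology already hold over the polynomial ring itself.
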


We also need a result that relates the cohomological dimension  of the intersection of two ideals with the dimension of their sum.

\begin{proposition}[{see  \cite[Proposition 19.2.7]{BroSharp}}]\label{Prop cd dim}
Let $(A,\n,K)$ be a complete local ring. Let $\fa,\fb\subsetneq A$  be two ideals.
Suppose that $\min\{\dim A/\fa,\dim A/\fb\}>\dim A/(\fa+\fb).$ Then,
$$
\cd(\fa\cap\fb)\geq\dim A -\dim A/(\fa+\fb)-1.
$$
\end{proposition}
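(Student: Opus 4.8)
The plan is to run the Mayer--Vietoris sequence for local cohomology and to reduce, by cutting with generic elements of $\n$, to the case in which $\fa+\fb$ is $\n$-primary. Recall that for the pair $\fa,\fb$ one has the long exact sequence
$$
\cdots\to H^{i}_{\fa+\fb}(A)\to H^{i}_{\fa}(A)\oplus H^{i}_{\fb}(A)\to H^{i}_{\fa\cap\fb}(A)\to H^{i+1}_{\fa+\fb}(A)\to\cdots,
$$
where $H^{i}_{\fa+\fb}$ records the support $V(\fa)\cap V(\fb)$ and $H^{i}_{\fa\cap\fb}$ records $V(\fa)\cup V(\fb)$. Writing $d=\dim A$ and $c=\dim A/(\fa+\fb)$, the goal is to exhibit a nonzero $H^{i}_{\fa\cap\fb}(A)$ with $i\geq d-c-1$, which by definition forces $\cd(\fa\cap\fb)\geq d-c-1$.

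First I would settle the base case $c=0$. Here $\fa+\fb$ is $\n$-primary, so $H^{i}_{\fa+\fb}(A)\cong H^{i}_{\n}(A)$ for all $i$; in particular $H^{d}_{\fa+\fb}(A)\cong H^{d}_{\n}(A)\neq0$ by Grothendieck's non-vanishing theorem. The hypothesis gives $\dim A/\fa>0$ and $\dim A/\fb>0$, and I would invoke the Hartshorne--Lichtenbaum vanishing theorem, valid because $A$ is complete local, to obtain $H^{d}_{\fa}(A)=H^{d}_{\fb}(A)=0$. Feeding this into the segment
$$
H^{d-1}_{\fa\cap\fb}(A)\to H^{d}_{\fa+\fb}(A)\to H^{d}_{\fa}(A)\oplus H^{d}_{\fb}(A)=0
$$
shows the map onto the nonzero module $H^{d}_{\fa+\fb}(A)$ is surjective, hence $H^{d-1}_{\fa\cap\fb}(A)\neq0$ and $\cd(\fa\cap\fb)\geq d-1=d-c-1$.

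For the inductive step $c\geq1$ I would choose, by prime avoidance, an element $x\in\n$ lying outside the finitely many minimal primes of $\fa+\fb$ of dimension $c$ (all distinct from $\n$, since $c\geq1$), so that over the complete local ring $\bar A=A/xA$ one has $\dim\bar A/(\bar\fa+\bar\fb)=c-1$, while $\dim\bar A/\bar\fa\geq\dim A/\fa-1\geq c$ and likewise for $\bar\fb$; thus the hypotheses persist with the invariant dropped by one, and $\dim\bar A\geq d-1$. The key comparison is that for any ideal $\mathfrak c$ the Independence Theorem identifies $H^{i}_{\mathfrak c(\bar A)}(\bar A)$ with $H^{i}_{\mathfrak c}(\bar A)$, and the latter vanishes for $i>\cd_A(\mathfrak c)$ by the standard fact that $\cd_A(\mathfrak c)$ bounds the local cohomology of every $A$-module; taking $\mathfrak c=\fa\cap\fb$ (whose extension shares a radical with $\bar\fa\cap\bar\fb$) yields $\cd_A(\fa\cap\fb)\geq\cd_{\bar A}(\bar\fa\cap\bar\fb)$. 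The induction then gives $\cd_{\bar A}(\bar\fa\cap\bar\fb)\geq\dim\bar A-(c-1)-1\geq d-c-1$, completing the descent.

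The step I expect to be the real obstacle is the clean application of the Hartshorne--Lichtenbaum vanishing theorem in the base case. When $A$ is a complete local \emph{domain}---which is precisely the situation in which this proposition is used here, $A$ being a complete regular local ring---the theorem says $H^{d}_{\fa}(A)=0$ exactly when $\dim A/\fa>0$, so the argument above is immediate. For a general complete local ring the correct criterion is that $H^{d}_{\fa}(A)=0$ if and only if $\dim A/(\fa+\fp)>0$ for every minimal prime $\fp$ with $\dim A/\fp=d$; the crude bound $\dim A/\fa>0$ no longer suffices, and one must either reduce to the top-dimensional minimal primes of $A$ or carry the full criterion through the Mayer--Vietoris bookkeeping. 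The remaining ingredients are formal manipulation of the exact sequence together with routine dimension counts.
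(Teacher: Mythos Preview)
The paper does not supply its own proof of this proposition; it is quoted from Brodmann--Sharp and used as a black box in the proof of Theorem~\ref{Main Depth Char p}. There is therefore nothing in the paper to compare your argument against.

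That said, your outline is correct and is essentially the standard proof. The base case $c=0$ via Mayer--Vietoris, Grothendieck non-vanishing for $H^{d}_{\fa+\fb}(A)\cong H^{d}_{\n}(A)$, and Hartshorne--Lichtenbaum vanishing for $H^{d}_{\fa}(A)$ and $H^{d}_{\fb}(A)$ is exactly how one argues, and your self-identified caveat is accurate: for a general complete local ring the correct HLV criterion involves all top-dimensional minimal primes, but since the only application in this paper is to the completion of the regular ring $R$, the domain case suffices. The inductive reduction modulo a generic $x\in\n$ is also sound; your checks that $\sqrt{(\fa\cap\fb)\bar A}=\sqrt{\bar\fa\cap\bar\fb}$, that $\cd_A(\fa\cap\fb)\geq\cd_{\bar A}(\bar\fa\cap\bar\fb)$ via the Independence Theorem plus the module-independence of cohomological dimension, and that the dimension inequalities persist in $\bar A$, are all correct. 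One small point to tighten: when you pick $x$, you should also avoid the minimal primes of $A$ of dimension $d$ (there is at least one, and all are distinct from $\n$ since $c\geq 1$ forces $d\geq 1$), so that $\dim\bar A=d-1$ rather than merely $\dim\bar A\geq d-1$; this is harmless for the inequality you want but makes the induction cleaner.
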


\begin{theorem}\label{Main Depth Char p}
Let $G$ be a connected graph on $[n]$ and $\cJ_G$ the corresponding binomial edge ideal in $R=K[x_1,\ldots,x_n,y_1,\ldots,y_n]$. 
Suppose that $\Char(K)=p>0,$ and  $G$ is not the complete graph.
If $G$ is $\ell$-vertex-connected, then $\pd(R/\cJ_G)\geq n+\ell-2.$
In particular,  $\Depth R/\cJ_G\leq n-\ell+2.$
\end{theorem}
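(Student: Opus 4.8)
The strategy mirrors the argument of Proposition \ref{Prop 2-vertex-conec}, but now I want to extract a quantitative cohomological-dimension bound rather than just a contradiction to $S_2$. First I note that since $G$ is connected and not complete, $\cJ_G$ is radical with minimal primes $P_\varnothing(G), Q_1,\dots,Q_t$, and $\dim R/\cJ_G = n+1$ with $\dim R/P_\varnothing(G)=n+1$ (Proposition \ref{MinorMinimal}, Remark \ref{RemDim}). Set $\fa = P_\varnothing(G)$ and $\fb = I := Q_1\cap\cdots\cap Q_t$, so $\cJ_G = \fa\cap\fb$. The key geometric input is Lemma \ref{lemma dim vertex-connectivity}: for every minimal prime $Q_i\neq P_\varnothing(G)$ we have $\dim R/(P_\varnothing(G)+Q_i)\leq n-\ell+1$, and since any prime containing $\fa+\fb$ contains some $\fa+Q_i$, this gives $\dim R/(\fa+\fb)\leq n-\ell+1$. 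Also $\dim R/\fa = n+1 > n-\ell+1$ and $\dim R/\fb = n+1 > n-\ell+1$ (the latter because $R/\cJ_G$ is equidimensional of dimension $n+1$, so $\fb$ has a minimal prime of dimension $n+1$), so the hypothesis of Proposition \ref{Prop cd dim} is met after passing to the completion.

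\textbf{Key steps in order.} (1) Reduce to the complete local ring $\widehat{R}_\m$: localizing and completing at $\m$ does not change $\cd$ of ideals generated in $R$, nor the relevant dimensions, since everything is $\m$-graded. (2) Apply Proposition \ref{Prop cd dim} with $A=\widehat{R}_\m$, $\fa = P_\varnothing(G)\widehat R$, $\fb = I\widehat R$: since $\fa\cap\fb = \cJ_G\widehat R$ and $\dim A - \dim A/(\fa+\fb) - 1 \geq 2n - (n-\ell+1) - 1 = n+\ell-2$, we get $\cd(\cJ_G) \geq n+\ell-2$. (3) Invoke Proposition \ref{Prop PS} (this is where $\Char K = p > 0$ is used): $\cd(\cJ_G)\leq \pd_R(R/\cJ_G)$. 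Combining, $\pd(R/\cJ_G)\geq n+\ell-2$. (4) Conclude via the Auslander–Buchsbaum formula $\pd(R/\cJ_G) + \Depth(R/\cJ_G) = 2n$, which yields $\Depth(R/\cJ_G)\leq 2n - (n+\ell-2) = n-\ell+2$.

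\textbf{Main obstacle.} The genuinely delicate point is verifying that all three dimension conditions of Proposition \ref{Prop cd dim} hold, and in particular that $\dim R/I = n+1$: this needs equidimensionality of $R/\cJ_G$, which holds because $\cJ_G$ is a homogeneous ideal in a polynomial ring whose quotient is (under the hypotheses in play) Cohen-Macaulay or at least unmixed — but in the bare statement of Theorem \ref{Main Depth Char p} we do not assume $S_2$. Here I should be careful: the theorem as stated assumes only that $G$ is not complete, so $\dim R/\cJ_G = n+1$ still holds by Remark \ref{RemDim} and Proposition \ref{MinorMinimal} (the maximum $n+c(S)-|S|$ over $c(S)\ge 1$ is attained at $S=\varnothing$ with value $n+1$ precisely when... — one must check $c(S)-|S|\le 1$ is \emph{not} needed for the lower bound, only the minimal-prime statement). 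In fact all that is needed is that $P_\varnothing(G)$ is a minimal prime with $\dim R/P_\varnothing(G) = n+1$ (so $\dim R/\fa = n+1$) and that $I$ has \emph{some} minimal prime of dimension $> n-\ell+1$; the latter holds since at least one of the $Q_i$ has dimension $\geq \dim R/\cJ_G$ only if $\cJ_G$ is equidimensional, so the honest fix is to take $\fb$ to be the intersection of those $Q_i$ with $\dim R/Q_i \geq n-\ell+2$ — which is nonempty because otherwise $\dim R/(P_\varnothing(G)\cap(\bigcap_{\text{small }Q_i})) $ forces a contradiction with how $\cJ_G$ decomposes — and verify $\fa\cap\fb$ still has the same cohomological dimension as $\cJ_G$ up to the bound needed. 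Sorting out this intersection bookkeeping so that Proposition \ref{Prop cd dim} applies cleanly is the part that requires care; everything else is a formal chain of cited results.
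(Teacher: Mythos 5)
Your skeleton is the same as the paper's (write $\cJ_G=\fa\cap\fb$ with $\fa=P_\varnothing(G)$ and $\fb=I=Q_1\cap\cdots\cap Q_t$, bound $\dim R/(\fa+\fb)$ by Lemma \ref{lemma dim vertex-connectivity}, apply Proposition \ref{Prop cd dim} and then Proposition \ref{Prop PS} and Auslander--Buchsbaum), but there is a genuine gap at exactly the point you flag as the ``main obstacle,'' and your attempted repair does not work. You justify $\dim R/\fb=n+1$ by asserting that $R/\cJ_G$ is equidimensional; equidimensionality is \emph{not} a hypothesis of Theorem \ref{Main Depth Char p} and is false in general for binomial edge ideals (the paper itself notes that $\cJ_{\cK_{t,2t}}$ is not equidimensional). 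Your fallback --- replacing $\fb$ by the intersection of only those $Q_i$ of large dimension --- destroys the identity $\fa\cap\fb=\cJ_G$, so even if Proposition \ref{Prop cd dim} applied to the new pair, it would bound $\cd$ of a different ideal, and local cohomology is not monotone in a way that lets you transfer the bound back to $\cJ_G$. As written, this step is unresolved.

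The repair is that no such equidimensionality is needed. Proposition \ref{Prop cd dim} requires only the strict inequality $\min\{\dim R/\fa,\dim R/\fb\}>\dim R/(\fa+\fb)$, and its conclusion involves only $\dim R/(\fa+\fb)$, not $\dim R/\fb$. The strict inequality holds automatically because $\fa$ and $\fb$ share no minimal primes: if $\fp$ is a minimal prime of $\fa+\fb$ of maximal dimension, then $\fp$ contains $P_\varnothing(G)$ and some $Q_i$; it cannot equal $P_\varnothing(G)$ (else $P_\varnothing(G)\supseteq Q_i$, forcing equality of two distinct minimal primes of $\cJ_G$) and cannot equal $Q_i$ (same argument), so $\fp$ strictly contains both, giving $\dim R/(\fa+\fb)<\dim R/P_\varnothing(G)=\dim R/\fa$ and $\dim R/(\fa+\fb)<\dim R/Q_i\leq\dim R/\fb$. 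This is precisely what the paper means by ``because $P_\varnothing(G)$ and $I$ do not share any minimal primes.'' With that observation your steps (2)--(4) go through verbatim and the proof closes; the completion step in your (1) is fine and is implicitly used in the paper as well.
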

\begin{proof}
Since $G$ is connected, we have $P_\varnothing (G)$ is a minimal prime for $\cJ_G$ by Remark \ref{MinorMinimal}. 
Let $P_\varnothing (G),Q_1,\ldots, Q_a$ denote the minimal primes of $\cJ_G.$ Let $I=Q_1\cap\ldots\cap Q_a$. We note that $\cJ_G=P_\varnothing (G)\cap I$, because $\cJ_G$ is a radical ideal.
In addition, $$\dim R/(P_\varnothing (G)+I)<\dim R/P_\varnothing (G)\hbox{ and }R/(P_\varnothing (G)+I)<\dim R/I,$$ because $P_\varnothing (G)$ and $I$ do not share any minimal primes. 

We also have that $\dim R/(P_\varnothing (G)+I)\leq n-\ell+1$ as immediate consequence of Lemma \ref{lemma dim vertex-connectivity}.
By Proposition \ref{Prop cd dim}, 
$$\cd(\cJ_G)\geq \dim R-\dim R/(P_\varnothing (G)+I)-1\geq 2n-n+\ell -1-1=n+\ell -2.$$
By Proposition \ref{Prop PS}, 
{\cb 
$\pd(R/\cJ_G)\geq \cd(\cJ_G)\geq n+\ell -2.$}
The statement regarding depth 
follows immediately from the Auslander-Buchsbaum Formula.
\end{proof}

\begin{theorem}\label{Main Depth Char Zero}
Let $G$ be a connected graph on $[n]$ and $\cJ_G$ the corresponding binomial edge ideal in $R=K[x_1,\ldots,x_n,y_1,\ldots,y_n]$. 
Suppose that $\Char(K)=0,$ and  $G$ is not the complete graph.
If $G$ $\ell$-vertex-connected, then $\pd(R/\cJ_G)\geq n+\ell-2.$
In particular, {\cb  $\Depth R/\cJ_G\leq n-\ell+2.$}
\end{theorem}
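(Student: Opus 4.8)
The plan is to reduce the characteristic-zero case to the already-established positive-characteristic statement (Theorem \ref{Main Depth Char p}) by a standard reduction-to-prime-characteristic argument, exploiting the fact that $\cJ_G$ is defined by integer (indeed, $0,\pm1$) coefficients. Concretely, I would first observe that the numerical invariants in play — $n$, $\ell$, and $\pd(R/\cJ_G)$ — are insensitive to enlarging the base field: since $\cJ_G$ is extended from $\ZZ[x_1,\dots,x_n,y_1,\dots,y_n]$, the projective dimension of $R/\cJ_G$ over $R=K[\underline{x},\underline{y}]$ equals that over $\QQ[\underline{x},\underline{y}]$, which in turn equals that over $\FF_p[\underline{x},\underline{y}]$ for all but finitely many primes $p$, by the generic freeness / flatness of the terms of a $\ZZ$-free resolution of $\ZZ[\underline{x},\underline{y}]/\cJ_G^{\ZZ}$ localized appropriately (see, e.g., the standard discussion in Bruns--Herzog or in \cite{BroSharp}). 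The combinatorial hypothesis "$G$ is $\ell$-vertex-connected and not complete" is purely graph-theoretic and so is preserved verbatim under any change of coefficient field.

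The key steps, in order, would be: (1) fix a minimal free resolution of $\ZZ_{(p)}[\underline{x},\underline{y}]/\cJ_G$ over $\ZZ_{(p)}[\underline{x},\underline{y}]$ for a suitable prime $p$ avoiding the finitely many "bad" primes where ranks jump; (2) invoke generic flatness to conclude that tensoring this resolution with $\QQ$ and with $\FF_p$ both yield minimal free resolutions over the respective polynomial rings, so $\pd_{\QQ[\underline{x},\underline{y}]}(\QQ[\underline{x},\underline{y}]/\cJ_G)=\pd_{\FF_p[\underline{x},\underline{y}]}(\FF_p[\underline{x},\underline{y}]/\cJ_G)$; (3) pass from $\QQ$ to an arbitrary characteristic-zero field $K$ by flat base change, which does not change projective dimension; and (4) apply Theorem \ref{Main Depth Char p} over $\FF_p$ to get $\pd \geq n+\ell-2$, and transport this inequality back along the chain of equalities to $K$. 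The depth statement then follows from Auslander--Buchsbaum exactly as in the positive-characteristic case, since $R$ is a polynomial ring of dimension $2n$ over $K$.

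Alternatively — and perhaps more cleanly, if the paper wishes to stay closer to the local-cohomology argument of Theorem \ref{Main Depth Char p} — one can replace Proposition \ref{Prop PS} by its characteristic-zero analogue: over a polynomial (or regular) ring of equicharacteristic zero one still has $\cd(I)\leq \pd_A(A/I)$, which follows from the fact that local cohomology modules $H^i_I(A)$ carry a $D$-module structure and the comparison of the $\check{\mathrm C}$ech complex with a suitable resolution; see the remark after \cite[Proposition 4.1]{P-S}, which is stated there in the generality that covers characteristic zero as well, or Lyubeznik's work on $D$-modules. Granting that, the verbatim argument of Theorem \ref{Main Depth Char p} goes through: $P_\varnothing(G)$ is still a minimal prime of $\cJ_G$ by Proposition \ref{MinorMinimal}, $\cJ_G=P_\varnothing(G)\cap I$ with $\dim R/(P_\varnothing(G)+I)\leq n-\ell+1$ by Lemma \ref{lemma dim vertex-connectivity}, Proposition \ref{Prop cd dim} gives $\cd(\cJ_G)\geq n+\ell-2$, and then $\pd(R/\cJ_G)\geq n+\ell-2$.

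The main obstacle is justifying the comparison across characteristics cleanly: one must be careful that the resolution chosen is minimal after base change (hence the need to avoid finitely many bad primes, where Betti numbers could a priori jump) and that "$\ell$-vertex-connected, not complete" is genuinely a field-independent input — which it is, but it should be stated explicitly so that Theorem \ref{Main Depth Char p}'s hypotheses are verified over $\FF_p$. If instead the $\cd \leq \pd$ route is taken, the obstacle shifts to citing a genuinely characteristic-zero source for that inequality; the cited reference \cite{P-S} should be checked to confirm it (or its standard companions in the $D$-module literature) covers equicharacteristic zero, since otherwise a separate citation is needed.
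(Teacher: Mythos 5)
Your primary argument—reducing to $K=\QQ$, using the fact that $\cJ_G$ is defined over $\ZZ$ to equate $\pd$ over $\QQ$ with $\pd$ over $\FF_p$ for $p\gg 0$, applying Theorem \ref{Main Depth Char p}, passing to general $K$ by faithfully flat field extension, and finishing with Auslander--Buchsbaum—is exactly the proof given in the paper, which cites \cite[Theorem 2.3.5]{HHCharZero} for the comparison of projective dimensions across characteristics. Your alternative route via a characteristic-zero analogue of $\cd(I)\leq\pd_A(A/I)$ (through $D$-modules) would also work, but it is not the route the paper takes.
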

\begin{proof}
We first assume that $K=\QQ.$
Let $A=\ZZ[x_1,\ldots,x_n,y_1,\ldots,y_n]$ and $$J=(x_iy_j-x_jy_i:\{i,j\}\in G \hbox{ and }i\neq j)A.$$
Then, 
$$
\pd_{R}(R/\cJ_G)=\pd_{A\otimes_\ZZ\QQ}(A\otimes_\ZZ\QQ/J\otimes_\ZZ\QQ)=\pd_{A\otimes_\ZZ\FF_p}(A/J\otimes_\ZZ\FF_p)
$$
for $p\gg 0$ \cite[Theorem 2.3.5]{HHCharZero}. Then, the result follows from Theorem \ref{Main Depth Char p}.

The general case follows from the fact that field extensions are faithfully flat and do not change projective dimension. The claim about depth follows from the Auslander-Buchsbaum Formula.

\end{proof}
{\cb
\begin{remark}
The inequalities in Theorems \ref{Main Depth Char p} and \ref{Main Depth Char Zero} are sharp.
For instance, if $n\geq 4$ and $C_n$ is the cycle with $n$ vertices, then
$\pd(R/\cJ_{C_n})=n$ \cite[Theorem 3.8]{ZaZo}. Then,
$$
\pd(R/\cJ_{C_n})=n=n+2-2=n+\kappa(G)-2.
$$
These bounds are also sharp if  $R/\cJ_G$ is Cohen-Macaulay, because $\kappa(G)=1$ and $\Depth(R/\cJ_G)=\dim(R/\cJ_G)=n+1$ (see Proposition \ref{Prop 2-vertex-conec}).
\end{remark}
}
%%%%%%%%%%%%%%%%%%%%%%%%%%%%%%%%%%%%%%%%%%%%%%%%%%%%%%%%%%%%%%%
\section{Multiplicities of binomial edge ideals}
%%%%%%%%%%%%%%%%%%%%%%%%%%%%%%%%%%%%%%%%%%%%%%%%%%%%%%%%%%%%%%%

In this section we give an algorithmic formula to compute two important invariants associated to the singularity of a ring: the Hilbert-Samuel and Hilbert-Kunz multiplicities. 
We start by reviewing preliminaries of these invariants.
We take advantage of the fact that $\cJ_G$ is a homogeneous ideal, and then $R/\cJ_G$ is an standard $\NN$-graded $K$ algebra.

\begin{definition}
Suppose that $T$ is a standard graded $K$-algebra of dimension $d$ with maximal homogeneous ideal $\n$.
The \emph{Hilbert-Samuel multiplicity of $R$} is defined by 
$$\e(T)=\lim\limits_{r\to\infty}\frac{d! \lambda(T/\n^r)}{r^d}=\lim\limits_{r\to\infty}\frac{d! \dim_K(T/\n^r)}{r^d}.$$

If $K$ has prime characteristic $p,$ \emph{the Hilbert-Kunz multiplicity of $T$,} introduced by Monsky \cite{MonskyHK}, is defined by 
$$\ehk(T)=\lim\limits_{e\to\infty}\frac{\lambda(T/\n^{[p^e]})}{p^{ed}}=\lim\limits_{e\to\infty}\frac{\dim_K(T/\n^{[p^e]})}{p^{ed}},$$ 
where $\n^{[p^e]}=(f^{p^e}: f\in\n)T.$
\end{definition}

These two multiplicities satisfy an additivity formula. Suppose $T$ is reduced, and $\fp_1,\ldots,\fp_t$ are the minimal primes of $T$ such that $\dim T =\dim T/\fp_i.$
Then,
\begin{equation}\label{EqAdd}
\e(T)=\e(T/\fp_1)+\ldots+\e(T/\fp_t), \quad \& \quad \ehk(T)=\ehk(T/\fp_1)+\ldots+\ehk(T/\fp_t).
\end{equation}

%The previous two equations imply that the building blocks to compute the multiplicities of $R/\cJ_G$ are its minimal primes.

{\cb
We  recall a fact of that allows us to compute the Hilbert-Samuel multiplicity of binomial edge ideals.
\begin{remark}\label{Prod HS}
Let $A$ and $B$ be two standard graded finitely generated $K$-algebras, and $T=A\otimes_K B$.
Let $h_A(t),h_B(t),$ and $h_T(t)$ denote the Hilbert series of $A,B$ and $T$ respectively.
There exists polynomials $f_A(t),f_B(t),$ and $f_T(t)$ such that
$$
h_A(t)=\frac{f_A(t)}{(1-t)^{\dim(A)}}, \; h_B(t)=\frac{f_B(t)}{(1-t)^{\dim(B)}},\;
\hbox{ and }\; h_T(t)=\frac{f_T(t)}{(1-t)^{\dim(T)}}.$$ Furthermore, $f_A(1)=e(A), f_B(1)=e(B),$ and $f_T(1)=e(T).$
Since $h_T(t)=h_A(t)\cdot h_B(t),$ we obtain $f_t(t)=f_A(t)\cdot f_B(t)$.
Then, $\e(T)=f_T(1)=f_A(1)f_B(1)=\e(A)\e(B).$
\end{remark}
}

We now recall the analogue of Remark \ref{Prod HS} for the Hilbert-Kunz multiplicity.

\begin{remark}\label{Prod HK}
Let $A$ and $B$ be two standard graded finitely generated $K$-algebras, where $\Char(K)=p>0$. 
Let $\m_A$ and $\m_B$ denote the the maximal homogeneous ideals of $A$ and $B$ respectively. Take $\m=\m_A\otimes_K B+A\otimes_K \m_B$.
If $T=A\otimes_K B$, then
$$
\e_{HK}(T)=\lim\limits_{e\to\infty}\frac{\dim_K (T/\m^{[p^e]})}{p^{e(\dim(A)+\dim(B))}}=\lim\limits_{e\to\infty}\frac{\dim_K A/\m^{[p^e]}_A}{p^{e\alpha}}\cdot
\lim\limits_{e\to\infty}\frac{\dim_K B/\m^{[p^e]}_B}{p^{e\gamma}}=\e_{HK}(A)\cdot \e_{HK}(B).
$$
\end{remark}

We now can give the algorithmic formula for the Hilbert-Samuel and the Hilbert-Kunz multiplicity of $R/\cJ_G$. In particular, $\e_{HK}(R/J_G)$ is a rational number, which is not true in general \cite[Theorem 8.3]{BrennerHK}.

\begin{theorem}\label{Thm Mult}
Let $G$ be a graph on $[n]$ and $\cJ_G$ the corresponding binomial edge ideal in $R=K[x_1,\ldots,x_n,y_1,\ldots,y_n].$
Let $\alpha=\max\{ c(S)-|S|: S\subseteq [n]\}$,  
$\cA=\{S\subseteq [n]:c(S)-|S|=\alpha\}$, $B_{S,1},\ldots, B_{S,j_i}$ the connected component of the graph induced by $[n]\setminus S$ for $S\in \cA$. Then, 
$$ \e(R/J_G)=\sum_{S\in \cA}\left(\prod^{j_S}_{r=1}|B_{S,r}|\right)\quad \&\quad 
\e_{HK}(R/J_G)=\sum_{S\in \cA}\left(\prod^{j_S}_{r=1}\left(\frac{|B_{S,r}|}{2}+\frac{|B_{S,r}|}{(|B_{S,r}|+1)!}\right)\right),
 $$
 where the lats equality assumes that $K$ has positive characteristic.
\end{theorem}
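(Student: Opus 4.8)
The plan is to reduce everything to the known formulas for the multiplicities of the determinantal ideal $I_2(X)$ of a $2\times m$ generic matrix, together with the additivity formulas \eqref{EqAdd} and the product formulas in Remarks \ref{Prod HS} and \ref{Prod HK}. First I would recall that $R/\cJ_G$ is reduced with minimal primes the $P_S(G)$, and that by Remark \ref{RemDim} the top-dimensional minimal primes are exactly those $P_S(G)$ with $S\in\cA$, all of dimension $n+\alpha$. Hence by \eqref{EqAdd} both $\e(R/\cJ_G)$ and $\ehk(R/\cJ_G)$ are the sums over $S\in\cA$ of the corresponding multiplicities of $R/P_S(G)$. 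So the whole problem is computing $\e$ and $\ehk$ of a single $R/P_S(G)$, and then adding.

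Next I would analyze $R/P_S(G)$ for a fixed $S\in\cA$. By definition $P_S(G)=\big(\{x_i,y_i:i\in S\},\cJ_{\widetilde G_1},\ldots,\cJ_{\widetilde G_{j_S}}\big)$, where the $\widetilde G_r$ are complete graphs on the vertex sets $B_{S,r}$ of cardinality $|B_{S,r}|$. Quotienting by the variables indexed by $S$ kills $2|S|$ variables and does not affect either multiplicity, so $R/P_S(G)$ is isomorphic (as a graded $K$-algebra) to the tensor product over $K$ of the rings $R_r/\cJ_{\widetilde G_r}$, where $R_r$ is the polynomial ring in the variables $x_j,y_j$ for $j\in B_{S,r}$. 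Since $\cJ_{\widetilde G_r}=I_2(X_r)$ for a $2\times|B_{S,r}|$ generic matrix $X_r$, I can invoke the product formulas: $\e$ of the tensor product is the product of the $\e$'s (Remark \ref{Prod HS}) and likewise for $\ehk$ (Remark \ref{Prod HK}). So it remains to plug in $\e(R/I_2(X_m))$ and $\ehk(R/I_2(X_m))$ for the $2\times m$ case.

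The remaining inputs are the classical computations for the variety of $2\times m$ matrices of rank $\le 1$: its Hilbert--Samuel multiplicity is $m$ (e.g. via the degree formula for determinantal varieties in \cite{Bruns}, or because $R/I_2(X_m)$ is the Segre product $K[u,v]\# K[t_1,\ldots,t_m]$ whose $h$-polynomial evaluated at $1$ gives $m$), and its Hilbert--Kunz multiplicity is $\frac{m}{2}+\frac{m}{(m+1)!}$ (this is a known evaluation; one way is to use the Segre description and count monomials, or cite the standard reference for Hilbert--Kunz multiplicities of Segre products / minors of $2\times m$ matrices). Substituting $m=|B_{S,r}|$ and taking products over $r=1,\ldots,j_S$, then summing over $S\in\cA$, yields exactly
$$
\e(R/\cJ_G)=\sum_{S\in\cA}\prod_{r=1}^{j_S}|B_{S,r}|,\qquad
\ehk(R/\cJ_G)=\sum_{S\in\cA}\prod_{r=1}^{j_S}\left(\frac{|B_{S,r}|}{2}+\frac{|B_{S,r}|}{(|B_{S,r}|+1)!}\right).
$$
Rationality of $\ehk(R/\cJ_G)$ is then immediate since each summand is rational. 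The step I expect to be the main obstacle is pinning down (and correctly citing) the closed form $\ehk(R/I_2(X_m))=\frac{m}{2}+\frac{m}{(m+1)!}$; the reduction machinery (additivity plus tensor-product multiplicativity) is routine, but the base case for Hilbert--Kunz requires either a careful monomial count in the Segre product $K[u,v]\#K[t_1,\ldots,t_m]$ or a precise reference, and one must double-check that $\dim$ bookkeeping in Remark \ref{Prod HK} matches $\dim(R/I_2(X_m))=m+1$.
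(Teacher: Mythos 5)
Your proposal is correct and follows essentially the same route as the paper: identify the top-dimensional minimal primes as the $P_S(G)$ with $S\in\cA$, apply additivity of $\e$ and $\ehk$ over those components, decompose each $R/P_S(G)$ as a tensor product of $2\times |B_{S,r}|$ determinantal rings, and plug in the known values $\e=m$ and $\ehk=\frac{m}{2}+\frac{m}{(m+1)!}$ via the multiplicativity of Remarks \ref{Prod HS} and \ref{Prod HK}. The base case you flag as the main obstacle is handled in the paper by citing \cite[Proposition 2.15]{Bruns} for $\e$ and \cite{Eto,EtoYoshida} for $\ehk$.
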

 \begin{proof}
By definition of $\cA$ and Theorem \ref{IntersectionPrimes}, $\{ P_S(G):S\in\cA\}$ is the set of  primes with $\dim(R/P_S(G))=\dim(R/\cJ_G)$. Then, $\cA$ is the set of minimal primes of $\cJ_G$ with maximal dimension. 
We note that $R/P_S(G)$ is the tensor product over $K$ of rings with the form
$T_\gamma =K[X]_{2\times b}/I_2(X).$ 
Since $\e(T_b)=b$ \cite[Proposition 2.15]{Bruns}, we deduce that $\e(R/P_S(G))=\prod^{j_S}_{r=1}|B_{S,r}|$  by Remark \ref{Prod HS}.
Then,
$\e(R/J_G)=\sum_{S\in \cA}\left(\prod^{j_S}_{r=1}|B_{S,r}|\right)$ by Equation \ref{EqAdd}.

Since $\e_{HK}(T_\gamma)=\frac{\gamma}{2}+\frac{\gamma}{(\gamma+1)!}$
by \cite{Eto,EtoYoshida}, we have 
 $$\e_{HK}(R/P_S(G))=\prod^{j_S}_{r=1}\left(\frac{|B_{S,r}|}{2}+\frac{|B_{S,r}|}{(|B_{S,r}|+1)!}\right)$$ 
 by Remark \ref{Prod HK}.
Then,
$\e_{HK}(R/J_G)=\sum_{S\in \cA}\left(\prod^{j_S}_{r=1}\left(\frac{|B_{S,r}|}{2}+\frac{|B_{S,r}|}{(|B_{S,r}|+1)!}\right)\right)$ by Equation \ref{EqAdd}.
\end{proof}

Using the previous formulas we compute these multiplicities in a few cases.
\begin{example}
Let $\cK_{\ell,m}$ denote a complete bipartite graph with $1\leq \ell\leq m$. For this graph, the minimal primes of its binomial edge ideal are 
$P_\varnothing(\cK_{\ell,m})$, $(x_1,\ldots, x_\ell,y_1, \ldots,y_\ell)$  if $m> 1$, and
$(x_{\ell+1},\ldots, x_{\ell+m},y_{\ell+1}\ldots,y_{\ell+m})$ if $\ell>1$ \cite[Lemma 3.2]{SZBipartite}.
Then,
$$
\e_{HK}(R/\cJ_{\cK_{\ell,m}})=
\begin{cases} 
1 & \hbox{ if }m > \ell + 1\hbox{ or }\ell = 1\hbox{ and }m > 2,\\
2m+\frac{2m}{(2m+1)!} &\hbox{ otherwise.}
\end{cases}
$$
\end{example}

\begin{example}\label{mult 1-tough}
Let $G$ be a connected graph on $[n]$ and $\cJ_G$ the corresponding binomial edge ideal in $R.$
Suppose that $G$ is a $1$-tough graph with $n$ vertices.
By Proposition \ref{equiv 1-tough}, $P_{\varnothing}(G)$ is the only minimal prime with the same dimension as $\cJ_G.$
Then, 
$\e(R/\cJ_G)=n$. If $\Char(K)=p>0,$ then
$\e_{HK}(R/\cJ_G)=\frac{n}{2}+\frac{n}{(n+1)!}$.
In particular, these values are attained when $G$ is a Halmitonian graph.
\end{example}

\section*{Acknowledgments}
We thank Craig Huneke for helpful discussions.
We are grateful to the anonymous referee for valuable comments and suggestions.
The second author gratefully acknowledges the support of the National Science Foundation for support through Grant
DMS-1502282.

\bibliographystyle{alpha}
\bibliography{References}

{\footnotesize

\noindent \small \textsc{Department of Mathematics, Purdue University, West Lafayette, IN 47907-2067.} \emph{Email address}:  {\tt banerj19@purdue.edu}

\vspace{.25cm}

\noindent \small \textsc{Department of Mathematics, University of Virginia, Charlottesville, VA  22904-4135.} \emph{Email address}:  {\tt lcn8m@virginia.edu} 
 
}

\end{document}